\pdfoutput=1
\documentclass{amsart}

\usepackage{booktabs}
\usepackage{hyperref}
\hypersetup{
    colorlinks=true,       
    linkcolor=blue,          
    citecolor=blue,        
    filecolor=blue,      
    urlcolor=blue           
}
\usepackage[alphabetic,backrefs,lite]{amsrefs}
\usepackage{verbatim}

\usepackage[all]{xy}

\numberwithin{equation}{section}

\theoremstyle{plain}
\newtheorem{lemma}{Lemma}[section]
\newtheorem{theorem}[lemma]{Theorem}

\newtheorem{proposition}[lemma]{Proposition}

\theoremstyle{definition}

\theoremstyle{remark}

\newcommand{\cc}{\mathbb{C}}
\newcommand{\pp}{\mathbb{P}}
\newcommand{\rr}{\mathbb{R}}

\newcommand{\af}{\mathbb{A}}
\newcommand{\ff}{\mathbb{F}}

\newcommand{\zz}{\mathbb{Z}}
\newcommand{\ls}{\mathcal{L}}
\newcommand{\lsk}{\hat{\mathcal{L}}}
\newcommand{\lr}{\mathcal{R}}

\newcommand{\Osh}{{\mathcal O}}
\newcommand{\Ish}{{\mathcal I}}

\newcommand{\Xsh}{{\mathcal X}}

\newcommand{\Sym}{\operatorname{Sym}}
\newcommand{\rk}{\operatorname{rk}}

\begin{document}

\title[Secant varieties of  
Segre-Veronese
embeddings of $(\pp^1)^r$.]{Secant varieties of  
Segre-Veronese \\
embeddings of $(\pp^1)^r$.}
\author{Antonio Laface}
\address{
Departamento de Matem\'atica \newline
Universidad de Concepci\'on \newline 
Casilla 160-C \newline
Concepci\'on, Chile}
\email{antonio.laface@gmail.com}

\author{Elisa Postinghel}
\address{Centre of Mathematics for Applications, \newline University of Oslo \newline P.O. Box 1053 Blindern,\newline N0-0316 Oslo, Norway}
\email{elisa.postinghel@gmail.com}

\subjclass[2000]{14C20}
\keywords{Double points, secant varieties, birational transformations} 

\thanks{The first author was supported by Proyecto FONDECYT Regular 2011, N. 1110096. The second author was supported by Marie-Curie IT Network SAGA, [FP7/2007-2013] grant agreement PITN-GA-
2008-214584. Both authors were partially supported by Institut Mittag-Leffler.}

\begin{abstract}
We use a double degeneration technique to calculate 
the dimension of the secant variety of any Segre-Veronese
embedding of $(\pp^1)^r$.
\end{abstract}
\maketitle

\section*{Introduction}
The problem of determining the dimension 
of linear systems through double points 
in general position on an algebraic variety 
$X$ is a very hard one to be solved
in general. Complete results are known 
for varieties of small dimension~\cite{BaDr,ChCi,La,CaGeGi2,VT} 
and in any dimension just for $\pp^n$ by
the Alexander-Hirschowitz theorem~\cite{AlHi,Po}.

Secant varieties of Segre-Veronese varieties are not well-understood, so far, and many efforts have been made, see for example \cite{CaGeGi}, \cite{Ba}, \cite{Ab}, \cite{AbBr1}, \cite{AbBr2}.
In this paper we determine the dimension
of any linear system $\ls_{(d_1,\dots,d_r)}(2^n)$
of hypersurfaces of $(\pp^1)^r$ of multi-degree
$(d_1,\dots,d_r)$ through $n$ double points
in general position.
Solving this problem is equivalent, 
via Terracini's Lemma, to calculate the 
dimension of the secant variety of any
Segre-Veronese embedding $(\pp^1)^r\to \pp^N$,
defined by the complete linear system 
$|\Osh(d_1,\dots,d_r)|$ of $(\pp^1)^r$.
Our proof is based on a double induction,
on the dimension $r$ and on the degree
$d_1+\dots+d_r$. A basic step for our induction
is represented by the fundamental paper~\cite{CaGeGi3},
where the authors show that, if all the $d_i=1$,
then $\ls_{(1,\dots,1)}(2^n)$ has always but in 
one case ($r=4$) the expected dimension.

Our approach consists in degenerating $(\pp^1)^r$ to a union of two varieties both isomorphic to  $(\pp)^r$  and simultaneously degenerating the linear system to a linear system obtained as fibered product of  linear systems on the two components over the restricted system on their intersection. The limit linear system is somewhat easier than the original one, in particular this degeneration argument allows to use induction on the multi-degree and on $r$. This construction is a generalization of the technique introduced by Ciliberto and Miranda in \cite{CM1} and \cite{CM2} to study higher multiplicity interpolation problems in $\pp^2$ and recently generalized in \cite{Po} to the higher dimensional case to study linear systems of $d$-hypersurfaces of $\pp^r$ with a general collection of nodes.

The paper is organized as follows. In Section 1
we provide the basic notation. Section 2 contains
the statement of our main theorem together with 
its counterpart on secant varieties.
We enter the double degeneration technique 
in Section 3, while Section 4 is devoted to
apply this technique to compute the dimension 
of linear systems. In Section 5 we prove our 
theorem and finally we conclude in Section 6 
by describing all the linear systems whose
dimension is not the expected one.

\section{Notation}
Let $\ls:=\ls_{(d_1,\dots, d_r)}(2^n)$ be the 
linear system  of multi-degree 
$(d_1,\dots,d_r)$ hypersurfaces of $(\pp^1)^r$ 
which are singular at $n$ points in general 
position. Its \emph{virtual dimension} is defined 
to be
\[
 v(\ls):=\prod_{i=1}^r(d_i+1)-1-(r+1)n,
\]
i.e. the dimension of the linear system $|\mathcal{O}(d_1,\dots,d_{r})|$ of multi-degree $(d_1,\dots,d_r)$ hypersurfaces of $(\pp^1)^r$
minus the number of conditions imposed by the 
double points. The dimension of $\ls$ cannot 
be less than $-1$, hence we define the 
\emph{expected dimension} to be
\[
 e(\ls):=\max \{v(\ls), -1\}.
\]

If the conditions imposed by the assigned 
points are not linearly independent, the 
dimension of $\ls$ is greater that the 
expected one: in that case we say that 
$\ls$ is \emph{special}. Otherwise, if the 
dimension and the expected dimension of 
$\ls$ coincide, we say that $\ls$ is 
\emph{non-special}.

We are interested in investigating if a given 
linear system $\ls$ is non-special. 
The dimension of $\ls$ is \emph{upper-semicontinuous} 
in the position of the points in $(\pp^1)^n$ and
it achieves its minimum value when they are 
in \emph{general position}. 
Let $Z$ be the zero-dimensional scheme of 
length $(r+1)n$ given by $n$ double points 
in general position. With abuse of notation 
we will sometimes adopt the same symbol 
$\ls_{(d_1,\dots,d_r)}(2^n)$, or simply $\ls$, 
for denoting the linear system and the 
sheaf $\Osh(d_1,\dots,d_r)\otimes\Ish_Z$. 
With this in mind,
consider the following restriction 
exact sequence
\[
 \xymatrix@R=10pt{
 0\ar[r] & \ls\ar[r] &  \ls_{(d_1,\dots,d_r)}\ar[r] &{\ls_{(d_1,\dots,d_r)}}_{|Z}
 }.
\]
Taking cohomology, being 
$h^1((\pp^1)^r,\ls_{(d_1,\dots,d_r)})=0$ 
we get that $\ls$ is non-special if and 
only if 
\[
 h^0((\pp^1)^r,\ls)\cdot h^1((\pp^1)^r,\ls)=0.
\]

\section{The classification Theorem}

In this section we state our main theorem and
recall its connection with the dimension of the
secant varieties of the Segre-Veronese embeddings
of $(\pp^1)^r$.

\begin{theorem}\label{classif-p1r}
The linear system $\ls_{(d_1,\dots,d_r)}(2^n)$ 
of $(\pp^1)^r$ is non-special except in the 
following cases.
\begin{center}
\begin{tabular}{c|c|c|r|c}
$r$ & degrees & $n$ & $v(\ls)$ & $\dim(\ls)$\\
\midrule
$2$ & $(2,2a)$     & $2a+1$ & $-1$ & $0$\\
$3$ & $(1,1,2a)$  & $2a+1$ & $-1$ & $0$\\
       & $(2,2,2)$    & $7$        & $-2$ & $0$\\
$4$ & $(1,1,1,1)$ & $3$        & $0$  & $1$\\
\end{tabular}
\end{center}
\end{theorem}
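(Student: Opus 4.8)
The plan is to prove Theorem~\ref{classif-p1r} by a double induction, matching the strategy announced in the introduction: an \emph{outer} induction on the number of factors $r$ and an \emph{inner} induction on the total degree $d_1+\cdots+d_r$. For the base of the outer induction, the case $r=1$ is classical (every $\ls_{(d)}(2^n)$ on $\pp^1$ is non-special), and the Segre case $d_1=\cdots=d_r=1$ is precisely~\cite{CaGeGi3}, which supplies the single genuine exception $(1,1,1,1)$, $n=3$ recorded in the table. After reordering we may assume $d_1\le\cdots\le d_r$; whenever some $d_i=0$ the corresponding factor is inessential and the system reduces to one on $(\pp^1)^{r-1}$, so the inner induction may always start from the situation where every $d_i\ge 1$ and $d_r\ge 2$.

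For the inductive step I would degenerate the last $\pp^1$ factor so that the central fibre becomes $X_0=X_1\cup_R X_2$, with $X_1\cong X_2\cong(\pp^1)^r$ glued along $R\cong(\pp^1)^{r-1}$. The bundle $\Osh(d_1,\dots,d_r)$ then specializes to a sheaf restricting to $\Osh(d_1,\dots,d_{r-1},a)$ on $X_1$ and $\Osh(d_1,\dots,d_{r-1},b)$ on $X_2$ with $a+b=d_r$, both of which restrict on $R$ to $\Osh(d_1,\dots,d_{r-1})$. I distribute the $n$ double points generically as $n=n_1+n_2$ on the two components. The limit linear system $\ls_0$ is the fibered product of $\ls_1:=\ls_{(d_1,\dots,d_{r-1},a)}(2^{n_1})$ and $\ls_2:=\ls_{(d_1,\dots,d_{r-1},b)}(2^{n_2})$ over their common restriction to $R$, sitting in the left-exact sequence
\[
0\to\ls_0\to\ls_1\oplus\ls_2\to\ls_{(d_1,\dots,d_{r-1})},
\]
where the last map is the difference of the two restrictions to $R$.

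The numerics then close cleanly. Provided each $\ls_j$ is non-special (inner induction, since $a,b<d_r$) and each restriction to $R$ is surjective onto $H^0(R,\Osh(d_1,\dots,d_{r-1}))$ (outer induction controls this system on $(\pp^1)^{r-1}$), an Euler-characteristic count collapses to $h^0(\ls_0)=\prod_{i=1}^r(d_i+1)-(r+1)n=v(\ls)+1$, exactly the expected value. Since the dimension of the generic fibre is bounded above by that of the special fibre by upper-semicontinuity, this forces $\ls_{(d_1,\dots,d_r)}(2^n)$ to have expected dimension. The splitting $(a,b)$ and the partition $(n_1,n_2)$ must be chosen so that both side systems land in the non-special range and avoid the finite exceptional list; where this is impossible one balances degrees and points so that one side is empty and contributes only through $h^0=0$.

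The crux, and the most delicate point, is establishing surjectivity of the restriction to $R$ — equivalently the vanishing of the obstruction $h^1$ ensuring the gluing conditions along $R$ are independent. The kernel of that restriction is the system with the $r$-th degree lowered by one, so it feeds back into both inductions and must be shown to close without circularity. The numerically tight cases, where $v(\ls)$ is near $-1$ or $0$, are exactly where surjectivity can fail; these are precisely the rows of the table ($(2,2a)$ with $n=2a+1$, $(1,1,2a)$ with $n=2a+1$, and $(2,2,2)$ with $n=7$), and I would verify directly — by exhibiting an explicit special divisor or by a one-off restriction computation — both that these are special and that no other tight configuration escapes the recursion. Securing the small-$r$ anchors ($r=2,3$) so the degeneration never recurses into an unproven exceptional system is the other source of genuine case-work.
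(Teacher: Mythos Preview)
Your overall framework matches the paper's: degenerate the last $\pp^1$, split points between the two components, and run a double induction. But the mechanism you propose for closing the recursion does not work as stated, and the missing idea is precisely the one the paper introduces.

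You want both restriction maps $\ls_j\to H^0(R,\Osh(d_1,\dots,d_{r-1}))$ to be surjective. Surjectivity of the restriction from $\ls_j$ is governed by $h^1$ of the kernel $\lsk_j=\ls_{(d_1,\dots,d_{r-1},\,\cdot\,-1)}(2^{n_j})$; it holds exactly when $\lsk_j$ is non-special with $v(\lsk_j)\ge -1$. But summing these two inequalities forces $\prod_{i<r}(d_i+1)\cdot d_r\ge (r+1)n$, which is impossible at $n=n^+$. So you cannot have both restrictions surjective when proving emptiness. Your fallback, ``make one side empty,'' requires $v(\ls_2)=-1$ exactly (otherwise $v(\lsk_1)\neq v(\ls)$ and the count is off), i.e.\ $(r+1)\mid \prod_{i<r}(d_i+1)\,(b+1)$, a divisibility you cannot guarantee for arbitrary $(d_1,\dots,d_{r-1})$ and a freely chosen $b$.

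The paper resolves this with a \emph{second} degeneration: after taking $k=1$ it sends $\beta\in\{0,\dots,r-1\}$ of the $n_2$ double points onto $R$ itself. Now $\lsk_2\cong\ls_{(d_1,\dots,d_{r-1})}(2^{n_2-\beta},1^\beta)$ is a system on $(\pp^1)^{r-1}$, and the pair $(n_2,\beta)$ is chosen so that $\prod_{i<r}(d_i+1)=r(n_2-\beta)+\beta$, which always has a solution and makes $v(\lsk_2)=-1$ on the nose. Neither restricted system is the full $|\Osh_R|$ in this setup; instead $\lr_2\subseteq \ls_{(d_1,\dots,d_{r-1})}(1^{n_2-\beta},2^\beta)$, and a separate Transversality Lemma (using non-speciality of $\ls_{(d_1,\dots,d_{r-1},d_r-1)}(2^{n_1+\beta})$) computes $\dim(\lr_1\cap\lr_2)$ exactly. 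This $\beta$-trick, together with a finite list of computer-verified base cases for small $r$ and small degrees, is what makes the induction close; your proposal identifies the crux correctly but does not supply this mechanism.
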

All the exception in Theorem~\ref{classif-p1r}
where previously known. Moreover a proof of the
two dimensional case can be found in~\cite{La,VT}.
The three dimensional case was treated 
in~\cite{BaDr,CaGeGi2}. The special system in
dimension $4$ has been found in~\cite{CaGeGi}.

This theorem has an equivalent reformulation 
in terms of higher secant varieties of 
Segre-Veronese embeddings of products 
of $\pp^1$'s. 
Let $X$ be a projective variety of dimension 
$r$ embedded in $\pp^N$. 
The $n$-\emph{secant variety} $\textrm{S}_n(X)$ 
of $X$ is defined to be the Zariski closure 
of the union of all the linear spans in 
$\pp^N$ of $n$-tuples of independent points 
of $X$. We have, counting parameters, that
\[
 \dim(\textrm{S}_{n}(X))
 \leq
 \min\{nr+n-1,N\} 
\]
The variety $X$ is said to be 
$n$-\emph{defective} if strict inequality 
holds; it is said to be 
\emph{non}-$n$-\emph{defective} if equality holds.

Let $N:=\prod_{i=1}^r(d_i+1)-1$ and let 
$\nu: (\pp^1)^r\to\pp^N$ be the 
\emph{Segre-Veronese} embedding of multi-
degree $(d_1,\dots,d_r)$. 
Denote by $X$ the image of $\nu$.

\begin{theorem}\label{classif-p1r-secant}
Let $X$ be defined as above. 
The $n$-secant variety of $X$ is non-defective, 
with the same list of exceptions of 
Theorem \ref{classif-p1r}.
\end{theorem}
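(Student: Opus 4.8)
The plan is to derive Theorem~\ref{classif-p1r-secant} as a direct consequence of Theorem~\ref{classif-p1r} via Terracini's Lemma, since the two statements are dictionary translations of each other. First I would recall the precise correspondence set up in Section~1: the Segre-Veronese embedding $\nu\colon(\pp^1)^r\to\pp^N$ is defined by the complete linear system $|\Osh(d_1,\dots,d_r)|$, so that $N=\prod_{i=1}^r(d_i+1)-1$ equals the projective dimension of that system. The key observation is that the expected dimension of $\textrm{S}_n(X)$, namely $\min\{nr+n-1,N\}$, matches the expected dimension $e(\ls)$ of the linear system after the affine-to-projective bookkeeping is carried out.

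The heart of the argument is Terracini's Lemma, which I would invoke to identify the tangent space to $\textrm{S}_n(X)$ at a generic point with the linear span of the embedded tangent spaces $T_{p_1}X,\dots,T_{p_n}X$ at $n$ general points $p_i=\nu(q_i)$. The projectivized span of these $n$ tangent spaces has dimension equal to $N$ minus the number of independent linear conditions that the system of hyperplanes tangent to $X$ at $p_1,\dots,p_n$ must satisfy. Pulling back along $\nu$, a hyperplane section tangent to $X$ at $p_i$ corresponds precisely to a divisor in $|\Osh(d_1,\dots,d_r)|$ that is singular at $q_i$, i.e. has multiplicity at least two there. Hence the span of the tangent spaces has dimension $N-1-\dim\ls$, and so
\[
 \dim(\textrm{S}_n(X))=N-(N-\dim\ls)=\dim\ls+(r+1)n-(r+1)n+\cdots,
\]
where the bookkeeping shows that $X$ is non-$n$-defective exactly when $\ls=\ls_{(d_1,\dots,d_r)}(2^n)$ is non-special. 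Concretely, $h^0(\ls)\cdot h^1(\ls)=0$ is equivalent to the $n$ double points imposing independent conditions, which is equivalent to the tangent spaces spanning a space of the expected dimension.

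Once this equivalence is made precise, the theorem follows immediately: the list of special systems in Theorem~\ref{classif-p1r} transports verbatim to the list of $n$-defective cases for $X$, since specialty of $\ls$ and $n$-defectivity of $X$ are one and the same condition. I would verify in each of the four exceptional rows that the numerical gap $\dim(\ls)-v(\ls)$ coincides with the defect $\min\{nr+n-1,N\}-\dim(\textrm{S}_n(X))$, confirming that the exceptions correspond.

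The main obstacle here is not a deep one but rather a careful one: the entire content of Theorem~\ref{classif-p1r-secant} is already packed into Theorem~\ref{classif-p1r}, so the work lies entirely in making the Terracini translation airtight—correctly matching the affine dimension count of double points (length $(r+1)n$, giving $(r+1)n$ conditions) against the projective dimension count of spanned tangent spaces (each $T_{p_i}X$ having projective dimension $r$, contributing $r+1$ to the affine span). I expect the only subtlety to be confirming that the genericity hypothesis (points in general position) transfers correctly under $\nu$, which is immediate because $\nu$ is an embedding and carries general points to general points of $X$.
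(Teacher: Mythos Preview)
Your approach is exactly the paper's: Theorem~\ref{classif-p1r-secant} is not proved separately but is declared equivalent to Theorem~\ref{classif-p1r} via Terracini's Lemma, and you carry out precisely that translation. The only blemish is your displayed formula, which is garbled; the clean statement is $\dim\textrm{S}_n(X)=N-1-\dim\ls$, whence non-defectivity is equivalent to $\dim\ls=e(\ls)$.
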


A hypersurface $S$ of $(\pp^1)^r$ of 
multi-degree $(d_1,\dots,d_r)$ corresponds 
via the Segre-Veronese embedding  to a 
hyperplane section $H$ of $X$. 
Moreover $S$ has a double point at $p$ 
if and only if $H$ is tangent to $X$ at 
$\nu(p)$. Now, fix $p_1,\dots,p_n$ 
general points in $(\pp^1)^r$ and consider 
the linear system $\ls_{(d_1,\dots,d_r)}(2^n)$ of multi-degree $(d_1,\dots,d_r)$ hyperfsurfaces singular at $p_1,\dots,p_n$. 
It corresponds to the linear system 
of hyperplanes $H$ in $\pp^N$ tangent 
to $X$ at $\nu(p_1),\dots,\nu(p_n)$.
This linear system has as base locus 
the general tangent space to $\textrm{S}_{n}(X)$. 
The following classical result, known as 
\emph{Terracini Lemma},  proves the 
equivalence between Theorem~\ref{classif-p1r} 
and Theorem~\ref{classif-p1r-secant}.
\begin{lemma}[Terracini's Lemma]
Let $X\subseteq \pp^N$ be an irreducible, 
non-degenerate, 
projective variety of dimension $r$. 
Let $p_1,\dots,p_n$ be general points 
of $X$, with $n\leq N+1$.  
Then 
\[
 T_{\textrm{S}_{n}(X),p}
 =
 <T_{X,p_1},\dots,T_{X,p_n}>,
\]
where $p\in<p_1\dots,p_n>$ is a general 
point in $\textrm{S}_{n}(X)$.
\end{lemma}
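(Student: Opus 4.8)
The plan is to realise $\textrm{S}_n(X)$ as the closure of the image of an explicit parametrization and then compute its tangent space at a general point as the image of the differential of this map; working in the affine cone is the cleanest route. Let $\hat X\subseteq\af^{N+1}$ be the affine cone over $X$, of dimension $r+1$, and for each $i$ choose a local (analytic or \'etale) parametrization $f_i\colon U_i\to\af^{N+1}$ of $\hat X$ near a lift $v_i$ of $p_i$, with $U_i\subseteq\af^{r+1}$. Consider the join map
\[
\Phi\colon U_1\times\cdots\times U_n\times\af^n\to\af^{N+1},
\qquad
\Phi(t_1,\dots,t_n,\lambda_1,\dots,\lambda_n)=\sum_{i=1}^n\lambda_i\,f_i(t_i),
\]
whose image closure is exactly the affine cone $\hat{\textrm{S}}_n(X)$ over the secant variety, since a general point of $\langle p_1,\dots,p_n\rangle$ lifts to $\sum_i\lambda_i v_i$.

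First I would compute the differential of $\Phi$ at a point lying over a general $v=\sum_i\lambda_i v_i$, with image $p\in\pp^N$. The partial derivatives split into two families. Differentiating in the $\lambda_i$ directions produces the vectors $f_i(t_i)=v_i$, whose span is $\langle v_1,\dots,v_n\rangle$. Differentiating in the $t_i$ directions produces $\lambda_i\,df_i$, whose images span $\lambda_i$ times the affine tangent space $\hat T_{\hat X,v_i}$ of the cone at $v_i$; for general, hence nonzero, $\lambda_i$ this is just $\hat T_{\hat X,v_i}$. Thus the image of $d\Phi$ is $\langle v_1,\dots,v_n\rangle+\sum_i \hat T_{\hat X,v_i}$.

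The key simplification is that $\hat X$ is a cone: it is stable under scaling, so the Euler (radial) vector field is tangent to $\hat X$ along $\hat X$, giving $v_i\in\hat T_{\hat X,v_i}$ for every $i$. Hence $\langle v_1,\dots,v_n\rangle$ is already contained in $\sum_i\hat T_{\hat X,v_i}$, and the image of $d\Phi$ equals $\sum_i\hat T_{\hat X,v_i}$. Passing from the cone back to $\pp^N$, the affine tangent space $\hat T_{\hat X,v_i}$ is the cone over the embedded projective tangent space $T_{X,p_i}$, so the image of $d\Phi$ is the cone over $\langle T_{X,p_1},\dots,T_{X,p_n}\rangle$, which yields the asserted formula for $T_{\textrm{S}_n(X),p}$ up to identifying the image of the differential with the tangent space of the image variety.

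That last identification is where the hypotheses really enter and is the main obstacle to make rigorous. The image of $d\Phi$ is always \emph{contained} in the tangent space of $\hat{\textrm{S}}_n(X)$, but equality requires that the point be one where $\Phi$ is a submersion onto its image. This is exactly where I would invoke generality of $p_1,\dots,p_n$ and of $p$, together with generic smoothness in characteristic zero: since the source of $\Phi$ is smooth and $\Phi$ is dominant onto $\hat{\textrm{S}}_n(X)$, the map is smooth over a dense open subset of the image, so at a point lying over a general $p$ the differential $d\Phi$ is surjective onto $\hat T_{\hat{\textrm{S}}_n(X),v}$. The bound $n\le N+1$ ensures the $v_i$ can be taken linearly independent, so that $\Phi$ is non-degenerate. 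Once generic smoothness is granted, equality follows and the identification $T_{\textrm{S}_n(X),p}=\langle T_{X,p_1},\dots,T_{X,p_n}\rangle$ is complete.
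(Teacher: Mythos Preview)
The paper does not actually prove Terracini's Lemma: it is stated there as a classical result and left without proof, serving only to justify the equivalence between Theorem~2.1 and Theorem~2.2. So there is no ``paper's own proof'' to compare against.

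Your argument is the standard one and is correct. The parametrization via the affine cone, the computation of $d\Phi$, the observation that $v_i\in\hat T_{\hat X,v_i}$ because the cone is scaling-invariant, and the appeal to generic smoothness in characteristic zero to upgrade the inclusion $\mathrm{Im}(d\Phi)\subseteq\hat T_{\hat{\textrm{S}}_n(X),v}$ to an equality at a general point --- all of this is exactly how the lemma is usually established. One minor comment: the hypothesis $n\le N+1$ is not really needed for the proof to go through (and many references state Terracini without it); non-degeneracy of $X$ already guarantees that $n\le N+1$ general points of $X$ are linearly independent, but even if they were not, the differential computation and the generic smoothness argument still yield the stated equality. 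So you could drop the sentence about $n\le N+1$ ensuring non-degeneracy of $\Phi$ without loss.
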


\section{The degeneration technique} 

We begin by constructing a toric flat 
degeneration of the variety $X$ into
a union of two toric varieties
$X_1$ and $X_2$ both isomorphic to $X$.

\subsection{A toric degeneration of $(\pp^1)^r$}
Let $P=P_{(d_1,\dots,d_r)}$ be the convex lattice polytope 
$[0,d_1]\times\cdots\times[0,d_r]\subseteq\rr^r$.
Its integer points define the toric map 
which is the Segre-Veronese embedding  
$\nu: (\pp^1)^r\to \pp^N$ given by the line bundle 
$\mathcal{O}(d_1,\dots,d_r)$. As before we will denote 
by $X$ the image of $\nu$.
Consider the function $\phi: P\cap\zz^r\to\zz$
defined by
\[
 \phi(v) =
 \begin{cases}
 0 & \text{ if } v_r\leq k\\
 v_r-k & \text{ if } v_r>k.
 \end{cases}
\]
It defines a {\em regular subdivision} 
of $P$ in the following way. Consider the
convex hull of the half lines 
$\{(v,t)\in P\times\rr_{\geq 0}: t\geq \phi(v)\}$.
This is an unbounded polyhedra with two lower 
faces. By projecting these faces on $P$
we obtain the  regular subdivision
\[
 \mathcal{T}
 =
 \{
 P_{(d_1,\dots,d_{r-1},d_r-k)},
 P_{(d_1,\dots,d_{r-1},k)}
 \},
\] 
with $P_{(d_1,\dots,d_{r-1},d_r-k)}
\cup P_{(d_1,\dots,d_{r-1},k)}=P$ and 
$P_{(d_1,\dots,d_{r-1},d_r-k)}\cap P_{(d_1,\dots,d_{r-1},k)}=P_{(d_1,\dots,d_{r-1})}$, 
where $P_{(d_1,\dots,d_{r-1})}=[0,d_1]
\times\cdots\times[0,d_{r-1}]$. 
We show the configuration of this toric 
degeneration in  Figure~\ref{pancarre} 
for $r=3$.
\begin{figure}
\[
\xymatrix@!0{
&&&*=0{}\ar@{--}[dd]\ar@{-}[rr]&&*=0{}\ar@{-}[rr]&*=0{}&*=0{}\ar@{-}[rr]&*=0{}\ar@{--}[dd]&*=0{}\ar@{-}[rr]&&*=0{}\ar@{-}[r]&*=0{}\ar@{-}[dd]\\
&&*=0{}\ar@{-}[ru]&&*=0{}&*=0{}&&\ar@{--}[ru]&&&&*=0{}\ar@{-}[ru]&\\
&*=0{}\ar@{-}[ru]^{\textrm{\normalsize{$d_1$}}}&&*=0{}\ar@{--}[rr]&&*=0{}\ar@{--}[rr]&*=0{}\ar@{--}[ru]&*=0{}\ar@{--}[rr]&&*=0{}\ar@{--}[r]&*=0{}\ar@{-}[ru]\ar@{--}[rr]&&*=0{}\\
*=0{}\ar@{-}[rr]\ar@{-}[ru]&&*=0{}\ar@{--}[ru]\ar@{-}[r]&*=0{}\ar@{-}[rr]&*=0{}&*=0{}\ar@{-}[rr]\ar@{--}[ru]&&*=0{}\ar@{-}[rr]\ar@{--}[ru]&&*=0{}\ar@{-}[ru]*=0{}\ar@{-}[ru]&&*=0{}\ar@{-}[ru]&\\
&*=0{}\ar@{--}[ru]&&*=0{}&&&*=0{}\ar@{--}[ru]&&&*=0{}\ar@{-}[u]&*=0{}\ar@{-}[ru]&&\\
*=0{}\ar@{--}[ru]\ar@{-}[uu]^{\textrm{\normalsize{$d_2$}}}\ar@{-}[r]&*=0{}\ar@{-}[r]&*=0{}\ar@{-}[r]_{\textrm{\normalsize{$d_3-k$}}}&*=0{}\ar@{-}[rr]&&*=0{}\ar@{--}[uu]\ar@{--}[ru]\ar@{-}[rr]&\ar@{-}[rr]_{\textrm{\normalsize{$k$}}}&*=0{}\ar@{-}[rr]&&*=0{}\ar@{-}[u]\ar@{-}[ru]&&&
}
\]
\caption{A regular subdivision of $P_{(d_1,d_2,d_3)}$}\label{pancarre}
\end{figure}
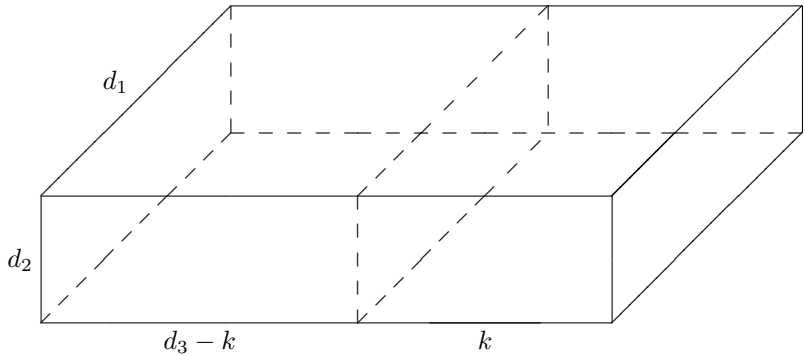
The regular subdivision defines a $1$-dimensional 
embedded degeneration in the following way.
Let $\Xsh$ be the toric subvariety of $\pp^N\times\af^1$
which is image of the toric morphism
\[
 (\cc^*)^r\times\cc^*\to \pp^N\times\af^1
 \qquad
 (x,t)\mapsto (\{t^{\phi(v)}x^v : v\in P\cap\zz^r\},t).
\]
Then $\Xsh$ admits two morphisms, induced by the projections,
onto $\pp^N$ and $\af^1$. Denote by 
$\pi:\Xsh\to\af^1$ the second morphism. 
The fiber $X_t$ of $\pi$ over $t$ is isomorphic
to $X$ if $t$ is not null and to the union
$X_1\cup X_2$ if $t=0$. Both the $X_i$'s are
isomorphic to $(\pp^1)^r$ and their intersection
$R$ is isomorphic to $(\pp^1)^{r-1}$. 
More precisely, $X_1$ is the Segre-Veronese 
embedding of $(\pp^1)^r$ defined by the complete 
linear system $|\mathcal{O}(d_1,\dots,d_{r-1},d_r-k)|$,
while $X_2$ is the Segre-Veronese embedding of 
$(\pp^1)^r$ defined by $|\mathcal{O}(d_1,\dots,d_{r-1},k)|$. 
The intersection $R$ is the Segre-Veronese 
embedding of $(\pp^1)^{r-1}$ given by 
$|\mathcal{O}(d_1,\dots,d_{r-1})|$.

\subsection{The $(k,n_2)$-degeneration of $\ls$}

A line bundle on $X_0$ corresponds to two line 
bundles, respectively on $X_1$ and on $X_2$, 
which agree on the intersection $R$.
We consider the linear system $\ls_t:=\ls$ 
 of multi-degree 
$(d_1,\dots,d_r)$ hypersurfaces of $X$ with $n$ assigned general 
points $p_{1,t},\dots,p_{n,t}$ of 
multiplicity $2$.

Fix a non-negative integer $n_1 \leq n$ 
and specialize $n_1$ points generically 
on $X_1$ and the other $n_2=n-n_1$ points 
generically on $X_2$, i.e. take a flat 
family $\{p_{1,t}\dots,p_{n,t}\}_{t\in\af^1}$ 
such that $p_{1,0},\dots,p_{n_1,0}\in X_1$ 
and $p_{n_1+1,0},\dots,p_{n_2,0}\in X_2$. 
The limiting linear system $\ls_0$ on $X_0$ 
is formed by the divisors in the flat limit 
of the bundle $\mathcal{O}(d_1,\dots,d_r)$ 
on the general fiber $X_t$, singular at 
$p_{1,0},\dots,p_{n,0}$. 
Consider the following linear systems:
\begin{equation}
 \begin{array}{llllll}
 \ls_1 & := & \ls_{(d_1,\dots,d_{r-1},d_r-k)}(2^{n_1})
 &
 \ls_2 & := & \ls_{(d_1,\dots,d_{r-1},k)}(2^{n_2})\\[3pt]
 \lsk_1 & := & \ls_{(d_1,\dots,d_{r-1},d_r-k-1)}(2^{n_1})
 &
 \lsk_2 & := & \ls_{(d_1,\dots,d_{r-1},k-1)}(2^{n_2}),
 \end{array}
\end{equation}
where $\ls_i$, $\lsk_i$ are defined on $X_i$
and $\lsk_i$ is the kernel of the restriction
of $\ls_i$ to $R$. This is given by the 
exact sequence:
\[
 \xymatrix@R=10pt{
 0\ar[r] & \lsk_i\ar[r] & \ls_i\ar[r] 
 & {\ls_i}_{|R}=:\lr_i\ar[r] & 0\\
 }
\]
The kernel $\lsk_i$ consists of those divisors of 
$\ls_i$ which vanish identically on $R$, 
i.e. the divisors in $\ls_i$ containing 
$R$ as component.
An element of $\ls_0$ consists either of a 
divisor on $X_1$ and a divisor  on $X_2$, 
both satisfying the conditions imposed by 
the multiple points, which restrict to the 
same divisor on $R$, or it is a divisor 
corresponding to a section of the bundle 
which is identically zero on $X_1$ (or on $X_2$) 
and which gives a general divisor in $\ls_2$ 
(or in $\ls_1$ respectively) containing $R$ 
as a component.
We have, by upper-semicontinuity, that
$\dim(\ls_0) \geq \dim(\ls)$.
\begin{lemma}\label{upp-semicont}
In the above notation, if $\dim(\ls_0) = e(\ls)$, 
then the linear system $\ls$ has the expected 
dimension, i.e. it is non-special.
\end{lemma}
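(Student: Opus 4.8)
The plan is to prove the statement by a simple squeeze argument, trapping $\dim(\ls)$ between two bounds that both collapse onto $e(\ls)$ under the hypothesis. The key observation is that, independently of any degeneration, one always has the chain of inequalities
\[
 e(\ls) \leq \dim(\ls) \leq \dim(\ls_0),
\]
and that the hypothesis $\dim(\ls_0)=e(\ls)$ pins the two outer terms together, forcing all three to coincide.

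First I would establish the lower bound $\dim(\ls)\geq e(\ls)$. A double point in general position is a zero-dimensional scheme of length $r+1$ (the first infinitesimal neighbourhood of a smooth point of the $r$-fold $(\pp^1)^r$), so it imposes at most $r+1$ conditions on $|\Osh(d_1,\dots,d_r)|$, and the $n$ of them together impose at most $(r+1)n$ conditions. Hence $\dim(\ls)$ can drop by at most $(r+1)n$ from $\dim|\Osh(d_1,\dots,d_r)|=\prod_{i=1}^r(d_i+1)-1$, which gives $\dim(\ls)\geq v(\ls)$. Since moreover $\dim(\ls)\geq -1$ by convention, we obtain $\dim(\ls)\geq \max\{v(\ls),-1\}=e(\ls)$; this is exactly the content of the discussion of virtual and expected dimension in Section~1.

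The upper bound $\dim(\ls)\leq \dim(\ls_0)$ is already recorded in the excerpt, where it is observed by upper-semicontinuity of the dimension of the fibres in the flat family $\{\ls_t\}_{t\in\af^1}$, whose general member is $\ls$ and whose central fibre on $X_0=X_1\cup X_2$ is $\ls_0$. Assembling the two bounds into the displayed chain and imposing the hypothesis $\dim(\ls_0)=e(\ls)$ forces $e(\ls)\leq \dim(\ls)\leq e(\ls)$, whence $\dim(\ls)=e(\ls)$, which is precisely the assertion that $\ls$ is non-special.

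I do not expect a genuine obstacle in this argument: the lemma is a formal consequence of the two inequalities above, one supplied by the degeneration setup (upper-semicontinuity) and the other elementary (the standard lower bound by the virtual dimension). The substantive difficulty of the paper lies not here but in verifying the hypothesis $\dim(\ls_0)=e(\ls)$ itself, through the inductive analysis of the split system on $X_1\cup X_2$ built from $\ls_1,\ls_2$ and the kernels $\lsk_1,\lsk_2$; the role of this lemma is simply to license that strategy by reducing non-speciality of $\ls$ to a dimension count on the degenerate fibre.
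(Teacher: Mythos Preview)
Your proposal is correct and matches the paper's approach: the paper states the lemma without proof, immediately after recording the upper-semicontinuity inequality $\dim(\ls_0)\geq\dim(\ls)$, so the intended argument is precisely the squeeze $e(\ls)\leq\dim(\ls)\leq\dim(\ls_0)=e(\ls)$ that you spell out. There is nothing to add.
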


We will say that $\ls_0$ is obtained from $\ls$ 
by a $(k,n_2)$-\emph{degeneration}.

\subsection{The $(k,n_2,\beta)$-degeneration of $\ls$}\label{IIdeg}

Fix a non-negative integer $\beta < \min(r,n_2)$.
Suppose that we have already performed 
a $(k,n_2)$-degeneration of $\ls$.
We perform a further degeneration of 
the linear system $\ls_0$ on the central fiber 
by sending $\beta$ among the $n_2$ double 
points of $X_2$ to the intersection $R$ 
of $X_1$ and $X_2$. 
As a result we obtain the following systems
\begin{equation}
 \begin{array}{llllll}
 \ls_1 & := & \ls_{(d_1,\dots,d_{r-1},d_r-k)}(2^{n_1})
 &
 \ls_2 & := & \ls_{(d_1,\dots,d_{r-1},k)}(2^{n_2})\\[3pt]
 \lsk_1 & := & \ls_{(d_1,\dots,d_{r-1},d_r-k-1)}(2^{n_1})
 &
 \lsk_2 & := & \ls_{(d_1,\dots,d_{r-1},k-1)}(2^{n_2-\beta},1^\beta),
 \end{array}
\end{equation}
On the intersection $R$ we have
\[ 
 \lr_2 \subseteq \ls_{(d_1,\dots,d_{r-1})}(2^\beta).
\]
Observe that we are abusing our original 
notation for $\ls_2$ and $\lsk_2$ since
the $n_2$ double points on $X_2$ are no
longer in general position.
We will say that $\ls_0$ is obtained 
from $\ls$ by a $(k,n_2,\beta)$-\emph{degeneration}, 
implying that if $\beta>0$ we perform 
the double degeneration, while if $\beta=0$ 
we do not need to perform it.

\section{Computing the dimension of the limit system}

Our aim is to compute $\dim(\ls_0)$ by recursion. 
The simplest cases occurs when all the 
divisors in $\ls_0$ come from a section 
which is identically zero on one of the 
two components: in those cases the matching 
sections of the other system must lie in 
the kernel of the restriction map. 
\begin{lemma}\label{simple deg}
If $\ls_2$ is empty, then $\dim(\ls_0)=\dim(\lsk_1)$.
\end{lemma}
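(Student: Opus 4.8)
The plan is to identify the limit system $\ls_0$ explicitly with $\lsk_1$ using the description of $\ls_0$ recalled above, and then read off the equality of dimensions with no further computation.

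First I would recall that a section of the limit line bundle on the central fiber $X_0=X_1\cup X_2$ is precisely a pair $(s_1,s_2)$ of sections $s_i\in H^0(X_i,\ls_i)$ whose restrictions to the intersection $R$ agree, i.e.\ $s_1|_R=s_2|_R$ as sections of the line bundle on $R$. On the level of global sections this says that $H^0(X_0,\ls_0)$ is the fibered product of $H^0(X_1,\ls_1)$ and $H^0(X_2,\ls_2)$ over $H^0(R,\lr)$, where the maps are the restriction morphisms $\rho_i\colon H^0(X_i,\ls_i)\to H^0(R,\lr)$ whose kernels are, by definition, $H^0(X_i,\lsk_i)$.

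The hypothesis that $\ls_2$ is empty means $H^0(X_2,\ls_2)=0$, so the only section on $X_2$ is the zero section. Hence every pair $(s_1,s_2)\in H^0(X_0,\ls_0)$ has $s_2=0$, whence $s_2|_R=0$, and the matching condition $s_1|_R=s_2|_R$ forces $s_1|_R=0$, i.e.\ $s_1\in\ker\rho_1=H^0(X_1,\lsk_1)$. Conversely, for any $s_1\in H^0(X_1,\lsk_1)$ the pair $(s_1,0)$ satisfies the matching condition, since both restrictions vanish on $R$. Thus the assignment $s_1\mapsto(s_1,0)$ is a linear isomorphism $H^0(X_1,\lsk_1)\cong H^0(X_0,\ls_0)$, and passing to projectivizations gives $\dim(\ls_0)=\dim(\lsk_1)$. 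In terms of the two types of elements of $\ls_0$ described above, the vanishing of $\ls_2$ removes all the genuine matching pairs (the first type) together with the sections supported on $X_1$, leaving exactly those sections that are identically zero on $X_2$ and cut out a divisor of $\ls_1$ containing $R$ as a component, which is precisely $\lsk_1$.

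The only point that needs care is the justification of the fibered-product description of $H^0(X_0,\ls_0)$, namely that the flat limit of the system $\ls$ consists exactly of pairs of sections agreeing on $R$. This is precisely the content of the explicit description of $\ls_0$ given before the statement; once that is granted, the argument above is immediate, so I expect the bookkeeping of the fibered product to be the only genuine ingredient rather than any estimate or dimension count.
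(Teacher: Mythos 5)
Your proof is correct and follows essentially the same route as the paper, which states this lemma without a formal proof, relying on exactly the observation you formalize: elements of $\ls_0$ are matching pairs of sections over $R$, so when $H^0(X_2,\ls_2)=0$ the matching condition forces the section on $X_1$ into the kernel $\lsk_1$ of the restriction map. Your fibered-product argument (with the check that $(s_1,0)\mapsto s_1$ is a linear isomorphism onto $H^0(X_1,\lsk_1)$) is precisely the missing detail, and correctly treats the description of $\ls_0$ on the central fiber as the given input rather than something to reprove.
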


If, on the contrary, the divisors on $\ls_0$ 
consist of a divisor on $X_1$ and a divisor 
on $X_2$, both not identically zero, which 
match on $R$, then the dimension of $\ls_0$ 
depends on the dimension of the intersection 
$\lr:=\lr_1\cap\lr_2$ of the restricted 
systems. 
\begin{lemma}\label{formula l_0} 
$\dim(\ls_0)=\dim(\lr)+\dim(\lsk_1)+\dim(\lsk_2)+2.$
\end{lemma}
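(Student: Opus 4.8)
The plan is to reduce the formula to a single rank--nullity computation for spaces of sections on the two components. First I would translate everything into vector spaces. Writing $\Osh_i$ for the line bundle defining $\ls_i$ on $X_i$, let $V_i\subseteq H^0(X_i,\Osh_i)$ be the subspace of sections vanishing to order two at the prescribed $n_i$ points, so that $\ls_i=\pp(V_i)$. Since $R\cong(\pp^1)^{r-1}$ and both $\Osh(d_1,\dots,d_{r-1},d_r-k)$ and $\Osh(d_1,\dots,d_{r-1},k)$ restrict to $\Osh_R(d_1,\dots,d_{r-1})$, the two restriction maps $\rho_i\colon V_i\to H^0(R,\Osh_R(d_1,\dots,d_{r-1}))$ land in one common ambient space. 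By definition $\lsk_i=\pp(\ker\rho_i)$ and $\lr_i=\pp(\rho_i(V_i))$, and the intersection $\lr=\lr_1\cap\lr_2$, taken inside this common projective space, equals $\pp(U)$ with $U:=\rho_1(V_1)\cap\rho_2(V_2)$.

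Next I would identify the limit system itself. Following the description of the elements of $\ls_0$ given above---a divisor of $\ls_0$ is a pair consisting of a divisor on $X_1$ and a divisor on $X_2$ agreeing on $R$, the degenerate elements supported on a single component being exactly the pairs with one entry the zero section---the system $\ls_0$ is the projectivization of the fibre product
\[
 K:=\{(s_1,s_2)\in V_1\oplus V_2 : \rho_1(s_1)=\rho_2(s_2)\}.
\]
I would then introduce the linear map $\Phi\colon K\to H^0(R,\Osh_R(d_1,\dots,d_{r-1}))$ sending $(s_1,s_2)$ to the common restriction $\rho_1(s_1)=\rho_2(s_2)$.

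The two facts to verify are that $\Phi$ is surjective onto $U$ and that $\ker\Phi=\ker\rho_1\oplus\ker\rho_2$. For surjectivity, any $u\in U$ is simultaneously $\rho_1(s_1)$ for some $s_1\in V_1$ and $\rho_2(s_2)$ for some $s_2\in V_2$, so the pair $(s_1,s_2)$ lies in $K$ and maps to $u$. For the kernel, $\Phi(s_1,s_2)=0$ forces $\rho_1(s_1)=\rho_2(s_2)=0$, i.e. $s_1\in\ker\rho_1$ and $s_2\in\ker\rho_2$, and conversely every such pair lies in $K$ and is killed by $\Phi$. Rank--nullity applied to $0\to\ker\rho_1\oplus\ker\rho_2\to K\xrightarrow{\Phi}U\to 0$ gives $\dim K=\dim U+\dim\ker\rho_1+\dim\ker\rho_2$. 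Passing back to projective dimensions, each of the four projectivizations contributes a $-1$, so subtracting one from the vector-space identity turns the three summands on the right into $\dim\lr+\dim\lsk_1+\dim\lsk_2$ and leaves a correction of $+2$, which is precisely the claimed formula.

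The computation is essentially formal; the only points demanding care are that $\lr_1$ and $\lr_2$ genuinely sit inside the same ambient linear system on $R$---so that the projective intersection corresponds to the vector-space intersection $U$---and the surjectivity of $\Phi$ onto the \emph{full} intersection $U$, which is where the independence of the two lifts $s_1,s_2$ is used. I would also note that the identity is an honest equality of vector-space dimensions, hence remains valid in the degenerate ranges where some of $\lr,\lsk_1,\lsk_2$ are empty, once one adopts the convention that the empty system has dimension $-1$; in particular it specializes correctly to the case $\ls_2$ empty already treated in Lemma~\ref{simple deg}.
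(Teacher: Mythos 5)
Your proof is correct and is essentially the paper's own argument: the paper's one-line proof---a section of $H^0(X_0,\ls_0)$ is obtained by taking an element of $H^0(R,\lr)$ and choosing preimages on each component, giving $h^0(X_0,\ls_0)=h^0(R,\lr)+h^0(X_1,\lsk_1)+h^0(X_2,\lsk_2)$---is exactly your exact sequence $0\to\ker\rho_1\oplus\ker\rho_2\to K\xrightarrow{\Phi}U\to 0$ in compressed form. Your write-up simply makes explicit the fibre-product description of $\ls_0$ and the rank--nullity bookkeeping that turns the $h^0$ identity into the projective-dimension formula with the $+2$.
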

\begin{proof}
A section of $H^0(X_0,\ls_0)$ is obtained 
by taking an element in $H^0(R,\lr)$ and 
choosing preimages of such an element: 
$h^0(X_0,\ls_0)=h^0(R,\lr)+h^0(X_1,\lsk_1)
+h^0(X_2,\lsk_2)$. 
Thus, at the linear system level we get 
the formula.
\end{proof}

\subsection{Transversality of the restricted systems}

The crucial point is to compute the 
dimension of $\lr$. If the systems 
$\lr_1,\lr_2 \subseteq 
|\mathcal{O}_{R}(d_1,\dots,d_{r-1})|$ 
are \emph{transversal}, i.e. if they 
intersect properly, then the dimension 
of the intersection $\lr$ is easily 
computed. 
It is immediate to see that if 
$\lsk_1$ (or $\lsk_2$) is non-special with virtual dimension 
$\geq -1$, and moreover $\ls_1$ 
(resp. $\ls_2$) is non-special, then 
the restricted system $\lr_1$ 
(resp. $\lr_2$) is the complete 
linear system 
$|\mathcal{O}_R(d_1,\dots,d_{r-1})|$ 
and transversality trivially holds. 
If none of the two kernel systems 
satisfies this property, so that 
$\lr_1,\lr_2$ are both proper subsets 
of $|\mathcal{O}_R(d_1,\dots,d_{r-1})|$, 
then we need to prove that they 
intersect properly. 

\subsection{Applying the double degeneration technique}
In what follows we will make use of
just two types of degenerations, so
we consider them in detail now.
Choose
\begin{equation}\label{case1}
 k=r
 \qquad
 n_2=\prod_{i=1}^{r-1}(d_i+1).
\end{equation}
Then $v(\ls_2)=-1$. Since transversality 
trivially holds, and in particular 
$\lr_1\cap\lr_2=\emptyset$, the problem 
of studying $\ls$ is recursively translated 
to the problem of studying the lower degree 
linear system 
$\lsk_1=\ls_{(d_1,\dots,d_{r-1},d_r-r-1)}(2^{n_1})$ 
which has the same virtual dimension. 
This method is simple and useful if one 
wants to focus on some particular $r$ and 
apply induction on the multi-degree 
$(d_1,\dots,d_r)$, provided that the 
base steps of the induction, i.e. any case 
$(d_1,\dots,d_r)$ with 
$d_1\leq\cdots\leq d_r\leq r+1$ are analysed 
in advance.   

However, the aim of this paper is to cover 
all cases of linear systems of divisors of 
any multi-degree $(\pp^1)^r$, for any $r$. 
To this end, we want to exploit induction 
not only on the multi-degree but also on 
$r$ in order to have a more compact and 
powerful method.  This can be done by choosing 
$k=1$ and applying a double degeneration 
as described in Section \ref{IIdeg}. 
This argument consists in an ad hoc 
adaptation of the degeneration technique 
implied in \cite{Po} to prove the 
non-speciality of linear systems of 
divisors of degree $d$ in $\pp^r$ with 
a general collection of double points.

Choose integers $k,n_2,\beta$ 
as follows
\begin{equation}\label{case2}
 k=1
 \qquad
 \prod_{i=1}^{r-1}(d_i+1) = r(n_2-\beta)+\beta,
 \qquad
 \beta\in\{0,\dots,r-1\}
\end{equation}
and perform the double degeneration of $X$, 
$\ls$ described above. 
Observe that elements of $\lsk_2$ are in 
bijection with elements of 
$\ls_{(d_1,\dots,d_{r-1})}(2^{n_2-\beta},1^\beta)$, 
that is the linear system 
of hypersurfaces of $|\Osh(d_1,\dots,d_{r-1})|$ 
on $(\pp^1)^{r-1}$ singular at $n_2-\beta$ points 
and passing through $\beta$ points, all of them in general position:
\[
 \lsk_2
 =
 \ls_{(d_1,\dots,d_{r-1},0)}(2^{n_2-\beta},1^\beta)
 \cong
 \ls_{(d_1,\dots,d_{r-1})}(2^{n_2-\beta},1^\beta).
\]
Observe that the last system has
virtual dimension $-1$ by the definition
of $n_2$ and $\beta$.
Let $\pi_r:(\pp^1)^r\to(\pp^1)^{r-1}$
be the projection on the first $r-1$
factors. Observe that 
\[
 \lr_2\subseteq
 \ls_{(d_1,\dots,d_{r-1})}(1^{n_2-\beta},2^\beta)
\]
since elements of $\ls_2$ contain the lines 
$\pi_r^{-1}(p_i)$ through each one of the $n_2$ 
double points $p_i$ of $X_2$. 
\begin{lemma}\label{res-lemma}
Let $\lr_2$ be as above. If 
$\ls_{(d_1,\dots,d_{r-1})}(2^{n_2})$ 
is non-special, then 
$\lr_2=\ls_{(d_1,\dots,d_{r-1})}(1^{n_2-\beta},2^\beta)$.
\end{lemma}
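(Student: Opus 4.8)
\emph{The plan is to} establish the reverse inclusion, the inclusion $\lr_2\subseteq\ls_{(d_1,\dots,d_{r-1})}(1^{n_2-\beta},2^\beta)=:T$ being already recorded above, and then to conclude by a dimension count. First I would fix coordinates adapted to the last factor: write a section of $\Osh(d_1,\dots,d_{r-1},1)$ on $X_2=(\pp^1)^r$ as $sA+tB$, where $[s:t]$ are the coordinates on the last $\pp^1$, $R=\{t=0\}$, and $A,B\in H^0((\pp^1)^{r-1},\Osh(d_1,\dots,d_{r-1}))$; then restriction to $R$ sends $sA+tB$ to $A$, and we may write $p_i=(q_i,[1:w_i])$ with $w_i=0$ exactly for the $\beta$ points lying on $R$. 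A direct local computation shows that $sA+tB$ is singular at $p_i$ if and only if $A(q_i)=B(q_i)=0$ and $\nabla A(q_i)+w_i\,\nabla B(q_i)=0$, the gradient being taken on $(\pp^1)^{r-1}$. In particular $A$ vanishes at every $q_i$ and, where $w_i=0$, also has vanishing gradient, which re-proves $\lr_2\subseteq T$; setting $A=0$ recovers $\lsk_2=\ls_{(d_1,\dots,d_{r-1})}(2^{n_2-\beta},1^\beta)$.

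Next I would use the restriction sequence $0\to\lsk_2\to\ls_2\to\lr_2\to0$, giving $h^0(\lr_2)=h^0(\ls_2)-h^0(\lsk_2)$, and evaluate each term through the $(A,B)$--description on $(\pp^1)^{r-1}$. Since $\lr_2\subseteq T$, it suffices to prove $h^0(\lr_2)=h^0(T)$. Writing $I_0$ (resp. $I_1$) for the indices with $w_i=0$ (resp. $w_i\neq0$), and abbreviating $\ls':=\ls_{(d_1,\dots,d_{r-1})}(1^{n_2})$, the space $\ls_2$ is the kernel of the coupling map $\Theta(A,B)=(\nabla A(q_i)+w_i\,\nabla B(q_i))_{i\in I_1}$ on $T\oplus\ls'$, while $h^0(\lsk_2)=\dim\ls'-\operatorname{rk}(\nabla_{I_1}|_{\ls'})$ and $h^0(T)=\dim\ls'-\operatorname{rk}(\nabla_{I_0}|_{\ls'})$. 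The hypothesis enters precisely here: $\ls_{(d_1,\dots,d_{r-1})}(2^{n_2})$ non-special means $h^0=0$ (its virtual dimension being negative), i.e. the joint gradient map $\nabla\colon\ls'\to\bigoplus_{\text{all }i}\cc^{r-1}$ is injective; this pins down the ranks of the separate and coupled gradient maps and yields the numerical identity $h^0(\ls_2)-h^0(\lsk_2)=h^0(T)$, whence $\lr_2=T$.

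The hard part will be the rank of the coupled map $\Theta$. Because the off-$R$ condition ties $\nabla A(q_i)$ to the $w_i$--rescaled gradient $w_i\nabla B(q_i)$, the image of $\Theta$ is $\nabla_{I_1}(T)+S'\big(\nabla_{I_1}(\ls')\big)$, where $S'$ is the diagonal rescaling by the $w_i$; this rescaling does \emph{not} preserve the subspace $\nabla_{I_1}(\ls')$, so its rank is genuinely larger than that of the $B$--gradient map alone. Equivalently, lifting a given $A\in T$ to a section of $\ls_2$ amounts to solving the prescribed-gradient interpolation $\nabla B(q_i)=-w_i^{-1}\,\nabla A(q_i)$ at the points of $I_1$, and the obstruction to doing so for every $A\in T$ lives in a cokernel that vanishes exactly when $\ls_{(d_1,\dots,d_{r-1})}(2^{n_2})$ is non-special. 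I would therefore spend the bulk of the argument showing that injectivity of the joint gradient map forces the weighted containment $\nabla_{I_1}(T)\subseteq S'\big(\nabla_{I_1}(\ls')\big)$, the step where the generality of the points and the numerical balance $\prod_{i=1}^{r-1}(d_i+1)=r(n_2-\beta)+\beta$ are both indispensable.
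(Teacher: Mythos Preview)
Your coordinate setup and the identification of $\lsk_2$ are correct, but the last two paragraphs go astray. The ``hard part'' you anticipate is a phantom. Once $\lsk_2$ is known to be empty, the map $\nabla_{I_1}\colon\ls'\to\bigoplus_{i\in I_1}\cc^{r-1}$ is injective (its kernel is exactly $\lsk_2$), and by the numerical balance $\prod_{i<r}(d_i+1)=r(n_2-\beta)+\beta$ both source and target have dimension $(r-1)(n_2-\beta)$. So $\nabla_{I_1}|_{\ls'}$ is a \emph{bijection}: for every $A\in T$ there is a unique $B\in\ls'$ with $\nabla B(q_i)=-w_i^{-1}\nabla A(q_i)$ on $I_1$, hence every $A$ lifts to $\ls_2$ and $T\subseteq\lr_2$. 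No weighted containment, no coupled-rank analysis---the image of $\nabla_{I_1}$ is already the whole target. This is why the paper's proof is so short: it simply argues that $\lsk_2=\emptyset$ and then asserts that $\lr_2$ is the complete system $T$.

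There is also a genuine gap in how you invoke the hypothesis. Injectivity of the \emph{joint} gradient map $\nabla$ on $\ls'$ (equivalently, emptiness of $\ls_{(d_1,\dots,d_{r-1})}(2^{n_2})$) does not ``pin down the ranks of the separate maps'': one only has $\ker\nabla\subseteq\ker\nabla_{I_1}$, and this inclusion can be strict. What must actually be established is $\ker(\nabla_{I_1}|_{\ls'})=0$, i.e.\ that $\lsk_2\cong\ls_{(d_1,\dots,d_{r-1})}(2^{n_2-\beta},1^\beta)$ is empty---a statement about $n_2-\beta$ double points, not $n_2$. The paper's route is to observe that imposing $\beta$ general simple points preserves non-speciality and that the virtual dimension of $\lsk_2$ is $-1$, whence $\lsk_2$ is empty. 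Your detour through the rank of the coupled map $\Theta$ and the rescaling $S'$ never reaches this, and the claimed implication from joint injectivity to the weighted containment is unsupported.
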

\begin{proof}
Since $\ls_{(d_1,\dots,d_{r-1})}(2^{n_2})$
is non-special, then 
$\ls_{(d_1,\dots,d_{r-1})}(2^{n_2},1^\beta)$
is non-special due to the fact
that the $\beta$ simple points 
are in general position on $R$.
Consider the exact sequence of 
linear systems:
\[
 \xymatrix@R=10pt{
 0\ar[r] & \lsk_2\ar[r] & \ls_2\ar[r] 
 & \lr_2\ar[r] & 0.\\
 }
\]
Since $\ls_{(d_1,\dots,d_{r-1})}(2^{n_2},1^\beta)$ 
is non-special of virtual dimension $-1$,
then it is empty. Thus $\lsk_2$ is empty
so that $\lr_2$ is the complete
linear system 
$\ls_{(d_1,\dots,d_{r-1})}(1^{n_2-\beta},2^\beta)$
obtained by restricting $\ls_2$
to $R$.
\end{proof}

In order to prove that transversality holds, it is enough to prove that $\beta$ double points 
supported on $R$ impose independent conditions 
to the linear system $\lr_1$. 
\begin{lemma}[Transversality Lemma]\label{transversality lemma}
In the same notation as above, if the linear system $\ls_{(d_1,\dots,d_r-k)}(2^{n_1+\beta})$ is non-special, for points in general position, then $\lr_1$ and $\lr_2$ intersect transversally on $R$. In particular
\[
 \dim(\lr)
 = 
 \max\{\dim(\lr_1)-(n_2-\beta)-r\beta,-1\}.
\]
\end{lemma}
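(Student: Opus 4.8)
The plan is to translate transversality into an \emph{independent conditions} statement on $\lr_1$, and then to transport that count from the divisor $R$ up to $X_1$. By Lemma~\ref{res-lemma} we may take $\lr_2=\ls_{(d_1,\dots,d_{r-1})}(1^{n_2-\beta},2^\beta)$, so that $\lr=\lr_1\cap\lr_2$ is precisely the subsystem of $\lr_1$ passing through the $n_2-\beta$ simple and $\beta$ double points fixed on $R$, all in general position. Hence $\lr_1$ and $\lr_2$ meet transversally exactly when these $(n_2-\beta)+r\beta$ conditions are independent on $\lr_1$. Since a general simple point is never a base point of a fixed nonempty linear system, the $n_2-\beta$ simple points automatically impose independent conditions; as already observed just before the statement, it therefore suffices to prove that the $\beta$ double points impose the full $r\beta$ independent conditions on $\lr_1$, i.e. that the system $\lr_1(2^\beta)$ of elements of $\lr_1$ with double points at the chosen $q_1,\dots,q_\beta\in R$ satisfies $\dim\lr_1(2^\beta)=\max\{\dim\lr_1-r\beta,-1\}$.

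First I would lift the problem from $R$ to $X_1\cong(\pp^1)^r$. A divisor of $\lr_1$ acquires a double point at $q_i$ exactly when some lift in $\ls_1=\ls_{(d_1,\dots,d_{r-1},d_r-k)}(2^{n_1})$ is singular at $q_i$ \emph{in the directions tangent to} $R$; call such a lift a tangential double point. Every element of $\lsk_1$ vanishes identically on $R$ and so satisfies all the tangential conditions, whence restriction to $R$ yields a short exact sequence
\[
 0\longrightarrow \lsk_1\longrightarrow \ls_1^{\flat}\longrightarrow \lr_1(2^\beta)\longrightarrow 0,
\]
where $\ls_1^{\flat}\subseteq\ls_1$ is cut out by the $\beta$ tangential double points. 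Comparing this with the defining sequence $0\to\lsk_1\to\ls_1\to\lr_1\to0$ shows that the number of conditions the double points impose on $\lr_1$ equals the number the $\beta$ tangential double points impose on $\ls_1$, reducing everything to an independent-conditions count on the product $X_1$.

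The heart of the argument, and the step I expect to be the main obstacle, is to control these tangential double points on the divisor $R$. Here I would use that $R\in|\Osh(0,\dots,0,1)|$ has trivial normal bundle in $X_1$, and apply the trace--residual (Horace) sequence with respect to $R$,
\[
 0\longrightarrow \lsk_1\longrightarrow \ls_1^{\flat}\longrightarrow \ls_{(d_1,\dots,d_{r-1})}(2^\beta),
\]
whose trace term is the complete double-point system of $\beta$ general points on $R\cong(\pp^1)^{r-1}$ and whose residual term is exactly $\lsk_1=\ls_{(d_1,\dots,d_{r-1},d_r-k-1)}(2^{n_1})$. The trace system is non-special by the inductive hypothesis on $r$, so the real content is that the restriction onto the trace is surjective with the correct dimension count; equivalently, that the cokernel, which embeds into $H^1(\lsk_1)$, vanishes. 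This is where I would consume the hypothesis that $\ls_{(d_1,\dots,d_r-k)}(2^{n_1+\beta})$ is non-special: the only condition of an honest double point lost upon restriction to $R$ is the single normal derivative, so that each point drops $r$ rather than $r+1$ from the count, and the non-speciality of the general $(n_1+\beta)$-fold system prevents any further dependency from arising when the $\beta$ points are specialized onto $R$ and made tangential. The delicate issue is precisely this passage from honest double points in general position to tangential double points supported on $R$, which is not monotone under semicontinuity and must be extracted from the exactness of the Horace sequence together with the vanishing supplied by the hypothesis.

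Finally I would reassemble. The previous step yields $\dim\lr_1(2^\beta)=\max\{\dim\lr_1-r\beta,-1\}$; imposing in addition the $n_2-\beta$ general simple points lowers the dimension by one each until the system becomes empty, giving $\dim\lr=\dim\lr_1(1^{n_2-\beta},2^\beta)=\max\{\dim\lr_1-(n_2-\beta)-r\beta,-1\}$, which is the asserted formula and in particular exhibits $\lr_1$ and $\lr_2$ as intersecting transversally on $R$.
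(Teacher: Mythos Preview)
Your reduction in the first two paragraphs is correct and matches the paper: both arguments come down to showing that the $\beta$ double points on $R$ impose $r\beta$ independent conditions on $\lr_1$, and you correctly lift this to the statement that the $\beta$ tangential double points impose $r\beta$ independent conditions on $\ls_1$ via the sequence $0\to\lsk_1\to\ls_1^\flat\to\lr_1(2^\beta)\to0$. The divergence is in how this last count is established.

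The paper's proof is much shorter and dispenses with the Horace machinery. It argues that, because $\beta<r$, the scheme consisting of $n_1$ general double points on $X_1$ together with the $\beta$ double points supported on $R$ is already general in $X_1$. The hypothesis that $\ls_{(d_1,\dots,d_r-k)}(2^{n_1+\beta})$ is non-special then says the $\beta$ \emph{honest} double points at the $q_i$ impose their full $(r+1)\beta$ independent conditions on $\ls_1$. Since each tangential double point is a subscheme of the corresponding honest double point, the tangential scheme a fortiori imposes its $r\beta$ conditions independently on $\ls_1$, and via $0\to\lsk_1\to\ls_1\to\lr_1\to0$ this is exactly the independence on $\lr_1$. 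In short, the passage you call ``delicate'' and propose to extract from a trace--residual sequence is handled in the paper by the elementary fact that a subscheme of a scheme imposing independent conditions also imposes independent conditions.

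Your Horace-type outline is not wrong in spirit, but as you yourself flag, you never actually carry out the vanishing needed to pass from general honest double points to tangential ones on $R$; the sketch stops at ``must be extracted from''. So your argument is incomplete precisely where the paper's one-line generality claim (via $\beta<r$) plus the subscheme observation do the work. If completed, your route would give a more explicitly cohomological justification of that step; the paper's route buys brevity by appealing directly to generality of the point configuration in $X_1$.
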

\begin{proof}
Since $\beta<r$, the scheme formed by $n_1$ double points on $X_1$ and $\beta$ additional double points on $R$ is general in $X_1$. If $\beta$ nodes in general position impose independent conditions to $\ls_1=\ls_{(d_1,\dots,d_r-k)}(2^{n_1})$, namely if $\ls_{(d_1,\dots,d_r-k)}(2^{n_1+\beta})$ is non-special, then the $\beta$ nodes supported on $R$ give independent conditions to $\lr_1$. Moreover, the intersection $\lr$ is formed 
by those elements in $\lr_1$ that are singular 
at $\beta$ points and pass trough $n_2-\beta$ 
points in general position. This proves the formula for $\dim(\lr)$.
\end{proof}

\section{The proof of the Classification Theorem}

Aim of this section is to prove 
Theorem~\ref{classif-p1r} by induction
on the multi-degree and on the 
dimension $r$ of the variety.

Define the integers
\[
n^-:=\left\lfloor \frac{1}{r+1}\prod_{i=1}^r(d_i+1)\right\rfloor, \qquad 
n^+:=\left\lceil \frac{1}{r+1}\prod_{i=1}^r(d_i+1)\right\rceil.
\]
Notice that if non-speciality holds for a collection of $n^-$ double points, then it holds for a smaller number of double points. On the other hand, if there are no hypersurfaces of multi-degree $(d_1,\dots,d_r)$ with $n^+$ general nodes, the same is true adding other nodes. It is enough to analyse the cases $n^-\leq n \leq n^+$.

In what follows we will make use of
the following fact proved in~\cite{CaGeGi}:
\begin{equation}\label{toPn}
 \dim \ls_{(d_1,\dots,d_r)}(2^n)
 =
 \dim \ls_d(d-d_1,\dots,d-d_r,2^n),
\end{equation}
where $d=d_1+\dots+d_r$ and the
elements of the right hand side system 
are hypersurfaces of $\pp^r$ with
$r$ points of multiplicity $d-d_1,\dots,d-d_r$
and $n$ double points, all in general position.

\begin{proposition}\label{base steps}
If $d_1,\dots,d_r$ are positive integers
as in the following table, then 
$\ls_{(d_1,\dots,d_r)}(2^n)$
is non-special for any value of $n$.
\begin{center}
\begin{tabular}{c|r|r}
$r$ & degrees & bound \\
\midrule
$2$ & $(d_1,d_2)$     & $d_1,d_2\leq 6$ \\
$3$ & $(d_1,d_2,d_3)$  & $d_1,d_2,d_3\leq 6$ \\
$4$ & $(d_1,d_2,d_3,d_4)$ & $d_1,\dots,d_4\leq 4$ \\
    & $(1,1,d_3,d_4)$ & $d_3,d_4\leq 6$ \\
    & $(2,2,2,5)$ & \\
$5$ & $(1,1,1,1,d_5)$ & $d_5\leq 5$\\
    
\end{tabular}
\end{center}
\end{proposition}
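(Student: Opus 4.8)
The plan is to turn the statement into a finite verification and to dispatch the resulting list by a combination of known results, the degeneration lemmas of Section~4, and one explicit computation. By the remark preceding the proposition, for each fixed multi-degree it suffices to check non-speciality for $n^-\le n\le n^+$, so each row of the table yields only finitely many systems $\ls_{(d_1,\dots,d_r)}(2^n)$. First I would apply the equivalence \eqref{toPn} to transport every such system to $\pp^r$: the system $\ls_{(d_1,\dots,d_r)}(2^n)$ has the same dimension as $\ls_d(d-d_1,\dots,d-d_r,2^n)$, the system of degree $d=d_1+\dots+d_r$ hypersurfaces of $\pp^r$ with $r$ general points of multiplicities $d-d_1,\dots,d-d_r$ and $n$ general double points. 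On $\pp^r$ one may then bring to bear the whole theory of linear systems with general fat points.

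For $r=2$ the transported systems $\ls_d(d_2,d_1,2^n)$ are plane systems with two general fat points and a number of general nodes. These I would handle by the classical method of quadratic Cremona transformations based at the two fat points together with an auxiliary node, which lower the degree and the multiplicities while preserving dimension and speciality; iterating reduces each case either to an empty system, to a system of the expected non-negative dimension, or to one falling under the complete classification of special plane systems in terms of $(-1)$-curves. Whenever the transported configuration consists only of double points the Alexander--Hirschowitz theorem \cite{AlHi} applies directly, while the all-ones Segre case underlying the rows with many unit degrees is the content of \cite{CaGeGi3}. For the families $(1,1,d_3,d_4)$ and $(1,1,1,1,d_5)$ I would instead exploit the paper's own machinery: applying Lemmas~\ref{formula l_0} and~\ref{transversality lemma} peels the degree of the last factor down by one at a time, reducing each such entry to the all-ones system $\ls_{(1,\dots,1)}(2^n)$ of \cite{CaGeGi3} together with strictly smaller base cases already on the same finite list, so the list feeds back into itself and terminates.

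The main obstacle is the small set of genuinely mixed systems in dimensions $r\ge 3$, such as $(2,2,2,5)$ in $\pp^4$ and the entries with $d_5$ near $5$ in $\pp^5$, whose transported configurations carry several points of comparable, relatively large multiplicity. Here no single interpolation theorem applies, and the higher-dimensional Cremona reduction is not as clean as in the plane, because the standard Cremona map of $\pp^r$ has base locus along the coordinate subspaces and so the naive multiplicity formula valid for $r=2$ fails. For these finitely many cases I would compute directly the dimension of the degree-$d$ graded piece of the homogeneous ideal of the corresponding zero-dimensional fat-point scheme, that is, the rank of the matrix of conditions imposed by the jets of the points, evaluating it at one explicit sufficiently general choice of points. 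Since the schemes are concrete and of bounded size this is a finite computation, and by upper-semicontinuity of $\dim(\ls)$ in the position of the points a single generic evaluation returning the virtual dimension proves non-speciality for points in general position.
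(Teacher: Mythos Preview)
The paper's own proof of this proposition is a one-line computer verification: after applying~\eqref{toPn} to pass to $\pp^r$, it evaluates the rank of the interpolation matrix at random points over $\ff_{307}$ for every system on the finite list. Your overall strategy ends in the same place---your final paragraph describes exactly this computation---but you attempt to shrink the set of cases needing direct check by handling $r=2$ via Cremona moves and the families $(1,1,d_3,d_4)$, $(1,1,1,1,d_5)$ via the $(1,n_2,\beta)$-degeneration of Section~\ref{IIdeg}.

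That degeneration step has a genuine gap. For $(1,1,1,1,d_5)$ the relation in~\eqref{case2} forces $\beta=1$, $n_2=4$, so $\lsk_2\cong\ls_{(1,1,1,1)}(2^{3},1^{1})$; but $\ls_{(1,1,1,1)}(2^3)$ is one of the exceptional systems of Theorem~\ref{classif-p1r}, of dimension $1$, hence $\dim(\lsk_2)=0$ rather than $-1$. Feeding this into Lemma~\ref{formula l_0} for $(1,1,1,1,2)$ at $n=8$ gives $\dim(\ls_0)=0$, strictly above $e(\ls)=-1$, so Lemma~\ref{upp-semicont} yields nothing. The same obstruction hits $(1,1,2a,d_4)$: here $\beta=0$, $n_2=2a+1$, and $\lsk_2\cong\ls_{(1,1,2a)}(2^{2a+1})$ is again special of dimension $0$. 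This is precisely why, later in Section~5, the paper treats $(1,1,d_3,d_4)$ and $(1,1,1,1,d_5)$ not with the $(1,n_2,\beta)$-degeneration but with the coarser $(4,\dots)$- and $(5,16)$-degenerations of type~\eqref{case1}, which avoid the exceptional lower-dimensional systems. Your claim that ``the list feeds back into itself'' therefore fails: the feedback loop runs through the special cases, and the recursion does not close.

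Since these families were exactly the ones you hoped to dispatch without computation, and since your remaining fallback is the same direct rank check the paper performs wholesale, the honest repair is to enlarge your ``genuinely mixed'' set to include them---at which point your argument and the paper's coincide.
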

\begin{proof}
By applying~\eqref{toPn} we reduce to 
consider the linear system of $\pp^r$ 
given by $\ls_d(d-d_1,\dots,d-d_r,2^n)$.
By means of the programs at this url:
\url{http://www2.udec.cl/~alaface/software/p1n.html}
we analyze these systems by extracting $n+r$ 
random points of $\pp^r$ on the finite field
$\ff_{307}$ and calculate the degree $d$
part of the ideal with the assigned fat points.
\end{proof}

\subsection{The case $r=2$}
This case is well known in literature,
see for example~\cite[Proposition 5.2]{La}
or~\cite{VT}.
In any case, for the sake of completeness, 
we provide a complete proof of this case 
as well.

\begin{proposition}
Let $d_1,d_2$ be positive integers.
Then $\ls_{(d_1,d_2)}(2^n)$
is special only in the cases described 
by Proposition~\ref{special}.
\end{proposition}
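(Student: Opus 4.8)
The plan is to prove the $r=2$ case by induction on the total degree $d_1+d_2$, using the two-dimensional specialization of the double degeneration technique from Section~3 with $k=1$. Since the statement is symmetric in $d_1,d_2$, I would assume $d_1\leq d_2$ throughout. By Proposition~\ref{base steps} every system with $d_1,d_2\leq 6$ is non-special, so these serve as the base of the induction, and it remains to treat $d_2\geq 7$. As noted just before the statement, it suffices to analyze $n^-\leq n\leq n^+$, since non-speciality (resp. emptiness) propagates to smaller (resp. larger) $n$.

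For the inductive step I would choose the $(k,n_2,\beta)$-degeneration of Section~\ref{IIdeg} with $k=1$, so that by \eqref{case2} the integers $n_2,\beta$ satisfy $d_1+1=2(n_2-\beta)+\beta$ with $\beta\in\{0,1\}$, and set $n_1=n-n_2$. On the component $X_1$ this produces the lower-degree system $\ls_{(d_1,d_2-1)}(2^{n_1})$, which is non-special by the inductive hypothesis on the total degree; its restriction kernel is $\lsk_1=\ls_{(d_1,d_2-2)}(2^{n_1})$. The auxiliary system $\ls_{(d_1)}(2^{n_2})$ on $R\cong\pp^1$ is non-special for trivial reasons, so Lemma~\ref{res-lemma} identifies $\lr_2$ explicitly, and the Transversality Lemma (Lemma~\ref{transversality lemma}) applies as soon as $\ls_{(d_1,d_2-1)}(2^{n_1+\beta})$ is non-special, which again follows from the inductive hypothesis. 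Then Lemma~\ref{formula l_0} computes $\dim(\ls_0)=\dim(\lr)+\dim(\lsk_1)+\dim(\lsk_2)+2$, and comparing with $e(\ls)$ via Lemma~\ref{upp-semicont} yields non-speciality of $\ls$.

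The main obstacle is bookkeeping at the boundary, where the naive degeneration predicts the wrong dimension and the genuine special systems appear. The recursion must be set up so that the exceptional families $(2,2a)$ with $n=2a+1$ emerge precisely where $\lsk_1$ or $\lr$ fails to have the dimension the formula would naively assign, and I would need to track the emptiness of $\lsk_1=\ls_{(d_1,d_2-2)}(2^{n_1})$ carefully, since a virtual dimension that drops below $-1$ forces the $\max\{\cdot,-1\}$ truncation in Lemma~\ref{transversality lemma} to activate. In particular, when $d_1=2$ the residual system $\ls_{(2,d_2-2)}$ keeps regenerating the same shape of exception, so I expect the delicate part to be verifying that the degeneration reproduces the single extra section recorded in the table of Theorem~\ref{classif-p1r} rather than accidentally creating or destroying speciality.

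A clean way to organize the argument is to split into cases according to $d_1$: the families $d_1=1$ and $d_1=2$ admit a direct analysis (with $d_1=2$ producing the listed exceptions), while for $d_1\geq 3$ the degeneration strictly decreases $d_2$ and the inductive hypothesis closes the loop without new special systems. Throughout I would lean on the reformulation \eqref{toPn} as a cross-check, translating $\ls_{(d_1,d_2)}(2^n)$ into a planar system $\ls_d(d-d_1,d-d_2,2^n)$ of $\pp^2$ whose speciality is classically understood, which both confirms the computations and pins down exactly where the exceptional cases must lie.
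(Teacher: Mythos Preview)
Your plan is viable but uses a different degeneration from the paper's. The paper applies the simpler $(k,n_2)$-degeneration of \eqref{case1} with $k=r=2$ and $n_2=d_1+1$: then $\ls_2=\ls_{(d_1,2)}(2^{d_1+1})$ has $v(\ls_2)=-1$, and once $\ls_2$ is empty Lemma~\ref{simple deg} collapses everything to the non-speciality of $\lsk_1=\ls_{(d_1,d_2-3)}(2^{n-d_1-1})$, which has the same virtual dimension as $\ls$ and is handled by induction on $d_2$. There is no $\beta$, no transversality lemma, and no computation of $\dim\lr$. Your $(1,n_2,\beta)$-degeneration is the heavier machinery the paper reserves for $r\geq4$; it does work for $r=2$, but at the cost of exactly the bookkeeping you anticipate (tracking $\lsk_1$, $\lsk_2$, $\lr$, and the truncation in Lemma~\ref{transversality lemma}). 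One point worth noting is that in both routes the family $(2,2a)$ is where the recursion is genuinely delicate: with the paper's $k=2$ choice, $\ls_2=\ls_{(d_1,2)}(2^{d_1+1})$ itself falls into the special list whenever $d_1$ is even, so the one-line reduction printed in the paper is terser than a fully watertight argument would be. Either way the exceptional family has to be isolated by the kind of separate small-$d_1$ analysis you propose, so your case split is sound; the paper's approach simply buys a shorter main branch.
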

\begin{proof}
By Proposition~\ref{base steps} 
it is enough to concentrate 
on the case $d_3\geq 6$.
Apply a $(2,d_1+1)$-degeneration.
Then $v(\ls_2)=-1$ and $v(\lsk_1)=v(\ls)$.
By induction hypothesis $\ls_2$ is non-special 
so that it is empty and 
$\dim(\ls_0)=\dim(\lsk_1)$
by Lemma~\ref{simple deg}.
If $(d_1,d_2)$ is not equal
to $(1,2a)$, then then $\lsk_1$ 
is non-special, by induction 
so we conclude by Lemma~\ref{upp-semicont}.
\end{proof}

\subsection{The case $r=3$}
We will prove the non-speciality of $\ls_{(d_1,d_2,d_3)}(2^{n})$, 
with all the the $d_i$ positive and 
$(d_1,d_2,d_3,n)\neq(2,2,2,7),(1,1,2a,2a+1)$.
This case also was previously known,
see~\cite{CaGeGi2} or~\cite{BaDr}.

\begin{proposition}
Let $d_1,d_2,d_3$ be positive integers.
Then $\ls_{(d_1,d_2,d_3)}(2^n)$
is special only in the cases described 
by Proposition~\ref{special}.
\end{proposition}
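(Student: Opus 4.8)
The plan is to prove non-speciality of $\ls_{(d_1,d_2,d_3)}(2^n)$ by induction on the degree $d_3$, running the same double-induction scheme used throughout the paper: the base cases $d_1,d_2,d_3\leq 6$ are already handled by Proposition~\ref{base steps}, so I may assume $d_3\geq 7$ and $d_1\leq d_2\leq d_3$. By the reduction at the start of Section~5, it suffices to treat $n^-\leq n\leq n^+$. The engine will be the $(k,n_2,\beta)$-degeneration of Section~\ref{IIdeg} with $k=1$, so that $X$ degenerates to $X_1\cup X_2$ with $X_1$ carrying the system of degree $(d_1,d_2,d_3-1)$ and $X_2$ the degree $(d_1,d_2,1)$ system on $R\cong(\pp^1)^2$. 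Choosing $n_2,\beta$ by the constraint~\eqref{case2}, namely $(d_1+1)(d_2+1)=3(n_2-\beta)+\beta=r(n_2-\beta)+\beta$ with $\beta\in\{0,1,2\}$, I would then assemble $\dim(\ls_0)$ from Lemma~\ref{formula l_0} as $\dim(\lr)+\dim(\lsk_1)+\dim(\lsk_2)+2$, and conclude via Lemma~\ref{upp-semicont} that the original system is non-special once I check this equals $e(\ls)$.

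First I would verify the hypotheses that feed the transversality and restriction lemmas. By Lemma~\ref{res-lemma}, since the two-dimensional system $\ls_{(d_1,d_2)}(2^{n_2})$ is non-special by the $r=2$ case (the already-proved previous Proposition), we get $\lr_2=\ls_{(d_1,d_2)}(1^{n_2-\beta},2^\beta)$ exactly, provided we are not in the excluded two-dimensional special configuration. Next, to apply the Transversality Lemma~\ref{transversality lemma} I need $\ls_{(d_1,d_2,d_3-1)}(2^{n_1+\beta})$ to be non-special for points in general position; this is a degree-$(d_3-1)$ system and so is covered by the inductive hypothesis on $d_3$, again excepting the listed special cases. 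Granting these, the Transversality Lemma gives the clean formula $\dim(\lr)=\max\{\dim(\lr_1)-(n_2-\beta)-3\beta,-1\}$, where $\lr_1$ is the restriction to $R$ of $\ls_1=\ls_{(d_1,d_2,d_3-1)}(2^{n_1})$, controlled by $\lsk_1=\ls_{(d_1,d_2,d_3-2)}(2^{n_1})$ through the restriction exact sequence.

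The decisive bookkeeping step is to check that the assembled $\dim(\ls_0)$ matches the virtual dimension $v(\ls)=(d_1+1)(d_2+1)(d_3+1)-1-4n$. Here I would distinguish the regime where everything is non-special with non-negative virtual dimension — so each term in Lemma~\ref{formula l_0} equals its virtual value and the arithmetic collapses to $v(\ls)$ by the identity $(d_1+1)(d_2+1)=3(n_2-\beta)+\beta$ built into~\eqref{case2} — from the degenerate regime where some system is empty, where I instead invoke Lemma~\ref{simple deg} (if $\ls_2$ turns out empty, $\dim(\ls_0)=\dim(\lsk_1)$) and track the $\max$ with $-1$ carefully. The allocation of $n_1$ versus $n_2$ must be chosen so that $n_1+\beta$ stays within the range where the degree-$(d_3-1)$ induction applies and so that $n_1=n-n_2$ remains non-negative; verifying that a valid split exists for every $n$ in $[n^-,n^+]$ is a finite but fiddly inequality check.

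The main obstacle I anticipate is not the generic arithmetic but the \emph{boundary} of the induction: I must ensure the inductive hypotheses are never invoked on one of the genuinely special configurations, in particular the degree $(1,1,2a)$ family and the $(2,2,2)$ case excluded in Theorem~\ref{classif-p1r}. Because reducing $d_3$ by one can land exactly on a special multi-degree (for instance stepping down toward $(1,1,2a)$), I would need to treat those descent chains separately, verifying by hand that the degeneration still yields the expected dimension or, where it does not, recognizing that the speciality is precisely the one recorded in Proposition~\ref{special}. Handling the parity-driven $(1,1,2a)$ case — where the excess comes from the geometry of $R\cong(\pp^1)^2$ rather than from a failure of transversality — is where the real care lies, and I expect that case to require an explicit description of the limit system rather than a purely numerical comparison.
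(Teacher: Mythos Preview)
Your plan diverges from the paper's proof in a way that makes the argument substantially harder, and it leaves a genuine gap.

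The paper does \emph{not} use the $(1,n_2,\beta)$-degeneration for $r=3$. It uses the simpler $(k,n_2)$-degeneration of~\eqref{case1} with $k=r=3$ and $n_2=(d_1+1)(d_2+1)$. Then $\ls_2=\ls_{(d_1,d_2,3)}(2^{n_2})$ has $v(\ls_2)=-1$, and crucially $(d_1,d_2,3)$ is never one of the special three-dimensional multi-degrees $(1,1,2a)$ or $(2,2,2)$; so by the base cases (Proposition~\ref{base steps}) $\ls_2$ is non-special, hence empty, and Lemma~\ref{simple deg} gives $\dim(\ls_0)=\dim(\lsk_1)$ with $\lsk_1=\ls_{(d_1,d_2,d_3-4)}(2^{n_1})$. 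Since $d_3-4$ has the same parity as $d_3$, the chain $(1,1,d_3)\rightsquigarrow(1,1,d_3-4)$ never lands on a special case unless the original system was already $(1,1,2a)$, and $(2,2,2)$ is excluded because $d_3\geq 7$. No transversality, no restriction lemma, no $\beta$.

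Your $k=1$ route hits an obstacle you did not fully identify. The step you flag---descending through $(1,1,2a)$---is one issue, but the more immediate problem is on the $(r-1)$-dimensional side: you need $\lsk_2\cong\ls_{(d_1,d_2)}(2^{n_2-\beta},1^\beta)$ to be empty. When $(d_1,d_2)=(2,2a)$ the constraint~\eqref{case2} forces $\beta=0$ and $n_2=2a+1$, so $\lsk_2\cong\ls_{(2,2a)}(2^{2a+1})$, which is \emph{special} with $\dim=0$, not $-1$. Then $\lr_2$ is a proper subsystem of $\ls_{(2,2a)}(1^{2a+1})$ cut by a condition you have not described, Lemma~\ref{res-lemma} does not apply, and the transversality computation of $\dim(\lr)$ via Lemma~\ref{transversality lemma} no longer yields $e(\ls)$ on the nose through Lemma~\ref{formula l_0}. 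You acknowledge the two-dimensional exclusion in passing, but you neither resolve it nor route around it; your final paragraph focuses instead on the $(1,1,2a)$ descent, which is a separate difficulty. The paper's choice $k=3$ is precisely what sidesteps both problems at once.
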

\begin{proof}
By Proposition~\ref{base steps} 
it is enough to concentrate 
on the case $d_3\geq 6$.
Apply a $(3,n_2)$-degeneration with 
$n_2:=(d_1+1)(d_2+1)$.
Then $v(\ls_2)=-1$ and $v(\lsk_1)=v(\ls)$.
Observe that by induction hypothesis
$\ls_2$ is non-special so that
it is empty and $\dim(\ls_0)=\dim(\lsk_1)$
by Lemma~\ref{simple deg}.
If $(d_1,d_2,d_3)$ is not equal
to $(1,1,2a)$, then then $\lsk_1$ 
is non-special, by induction 
so we conclude by Lemma~\ref{upp-semicont}.
\end{proof}

\subsection{The case $r=4$} We proceed with our 
investigation by proving the non-speciality
of $\ls_{(d_1,\dots,d_4)}(2^n)$ for
$(d_1,d_2,d_3,d_4)$ distinct from $(1,1,1,1)$.

\begin{proposition}
Let $d_1,d_2,d_3,d_4$ be positive integers.
Then $\ls_{(d_1,\dots,d_4)}(2^n)$
is special only in the cases described 
by Proposition~\ref{special}.
\end{proposition}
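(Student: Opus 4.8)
The plan is to mirror the structure of the $r=2$ and $r=3$ cases, but now invoking the full strength of the double degeneration machinery rather than the simple $(k,n_2)$-degeneration, since for $r=4$ induction on $r$ becomes essential. By Proposition~\ref{base steps} we may assume the degrees are large; after reordering we take $d_4\geq 5$ (the cases with all $d_i\leq 4$, the $(1,1,d_3,d_4)$ family, and the sporadic $(2,2,2,5)$ are already covered). By the reduction $n^-\leq n\leq n^+$ we only need to treat double point counts near the generic value $\tfrac14\prod(d_i+1)$. First I would set $k=1$ and choose $n_2,\beta$ as in~\eqref{case2}, so that $\prod_{i=1}^{3}(d_i+1)=4(n_2-\beta)+\beta$ with $\beta\in\{0,1,2,3\}$, and perform the corresponding $(1,n_2,\beta)$-degeneration.

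The heart of the argument is to assemble the three ingredients that feed Lemma~\ref{formula l_0}, namely $\dim(\lr)$, $\dim(\lsk_1)$, and $\dim(\lsk_2)$. For $\lsk_2$ I would use the identification
\[
 \lsk_2
 \cong
 \ls_{(d_1,d_2,d_3)}(2^{n_2-\beta},1^\beta),
\]
which has virtual dimension $-1$ by the defining relation for $n_2,\beta$; invoking the $r=3$ case (already proven) together with the genericity of the $\beta$ simple points shows this system is non-special, hence empty. To control $\lr=\lr_1\cap\lr_2$ I would apply Lemma~\ref{res-lemma} to identify $\lr_2$ with the complete restricted system $\ls_{(d_1,d_2,d_3)}(1^{n_2-\beta},2^\beta)$ on $R\cong(\pp^1)^3$, whose non-speciality again follows from the $r=3$ result, and then apply the Transversality Lemma~\ref{transversality lemma}. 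This last step requires that $\ls_{(d_1,d_2,d_3,d_4-1)}(2^{n_1+\beta})$ be non-special for points in general position, which is a lower-degree instance of the $r=4$ statement and so is available by the induction on $d_1+\cdots+d_r$.

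The remaining ingredient, $\dim(\lsk_1)=\dim\ls_{(d_1,d_2,d_3,d_4-2)}(2^{n_1})$, is handled by the same degree induction, and a bookkeeping computation should verify that $v(\lsk_1)+v(\lr_2)+\cdots$ reassembles to $v(\ls)$, so that Lemma~\ref{formula l_0} combined with Lemma~\ref{upp-semicont} yields non-speciality of $\ls$. The free parameter $n_1=n-n_2$ must be tuned so that both $\lsk_1$ and the transversality input land in ranges where the inductive hypotheses apply and where the virtual dimensions have the right sign; the genuine obstacle is precisely this balancing act. One must check that for the given splitting the virtual dimensions of $\lsk_1$ and of the auxiliary system $\ls_{(d_1,d_2,d_3,d_4-1)}(2^{n_1+\beta})$ are $\geq -1$ (or that the emptiness cases are benign), and that none of these auxiliary systems falls into one of the genuinely special families of Theorem~\ref{classif-p1r}. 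The delicate point is the $(1,1,1,1)$ exception: I expect that the only configuration forcing speciality of $\ls$ is the one whose degeneration produces the special system $\ls_{(1,1,1,1)}(2^3)$ on a component, and the bulk of the proof is verifying that for every other multi-degree the chosen $(1,n_2,\beta)$-degeneration avoids all four exceptional rows, so that speciality of $\ls_{(d_1,d_2,d_3,d_4)}(2^n)$ occurs exactly as described in Proposition~\ref{special}.
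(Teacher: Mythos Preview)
Your overall architecture---use the $(1,n_2,\beta)$-degeneration and feed Lemma~\ref{formula l_0} with the three pieces $\lr$, $\lsk_1$, $\lsk_2$---is exactly what the paper does for the \emph{generic} multi-degree. However, there is a genuine gap in how you isolate the problematic families, and it stems from misidentifying which exceptional row actually obstructs the argument.

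First, a minor but real oversight: Proposition~\ref{base steps} does \emph{not} cover the whole family $(1,1,d_3,d_4)$; it only covers $d_3,d_4\leq 6$. So ``the $(1,1,d_3,d_4)$ family \dots\ are already covered'' is false once $d_4\geq 7$, and those cases still need treatment.

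Second, and more importantly, the obstacle is not the $r=4$ exception $\ls_{(1,1,1,1)}(2^3)$. Since you assume $d_4\geq 5$, the systems $\lsk_1=\ls_{(d_1,d_2,d_3,d_4-2)}(2^{n_1})$ and $\ls_{(d_1,d_2,d_3,d_4-1)}(2^{n_1+\beta})$ both have last degree $\geq 3$, so they never land on $(1,1,1,1)$. The real obstruction lives on the $r=3$ side: the key step ``$\lsk_2\cong\ls_{(d_1,d_2,d_3)}(2^{n_2-\beta},1^\beta)$ is empty'' requires the $r=3$ system $\ls_{(d_1,d_2,d_3)}(2^{n_2-\beta})$ to be non-special. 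When $(d_1,d_2,d_3)=(1,1,2a)$ one computes $\beta=0$ and $n_2=2a+1$, so $\lsk_2\cong\ls_{(1,1,2a)}(2^{2a+1})$, which is precisely the special $r=3$ system of dimension $0$; hence $\lsk_2$ is \emph{not} empty and your dimension count via Lemma~\ref{formula l_0} collapses. The same concern arises for $(d_1,d_2,d_3)=(2,2,2)$. These families cannot be handled by the $(1,n_2,\beta)$-degeneration as stated.

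The paper deals with this by carving out the families $(1,1,1,d_4)$, $(1,1,4,d_4)$, $(1,1,d_3,d_4)$ with $d_3,d_4\geq 6$, and $(2,2,2,d_4)$, and treating each with a \emph{simple} $(4,n_2)$-degeneration as in~\eqref{case1}, using Lemma~\ref{simple deg} rather than the double degeneration. Only after excluding $(1,1,1,d_4)$, $(2,2,2,d_4)$ and $(1,1,2a,d_4)$ does the $(1,n_2,\beta)$-argument go through cleanly. Your proposal needs this separate treatment; without it, the inductive step fails on an infinite family.
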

\begin{proof}
We begin by analyzing four distinct cases.
\begin{enumerate}
\item 
$(d_1,d_2,d_3,d_4)=(1,1,1,d_4)$
with $d_4\geq7$. In this case we perform a 
$(4,8)$-degeneration obtaining the systems
\[
 \ls_1 = \ls_{(1,1,1,d_4-4)}(2^{n-8})
 \qquad
 \ls_2 = \ls_{(1,1,1,4)}(2^8).
\]
Since $\ls_2$ is empty by 
Proposition~\ref{base steps} and 
$\lsk_1=\ls_{(1,1,1,d_4-5)}(2^{n-8})$ 
is non-special by induction on $d_4$, 
we conclude using 
Lemma~\ref{simple deg}.
\item 
$(d_1,d_2,d_3,d_4)=(1,1,4,d_4)$, 
with $d_4\geq 6$. In this case we perform a
$(4,20)$-degeneration obtaining the systems
\[
 \ls_1 = \ls_{(1,1,4,d_4-4)}(2^{n-20})
 \qquad
 \ls_2 = \ls_{(1,1,4,4)}(2^{20}).
\]
Since $\ls_2$ is empty by 
Proposition~\ref{base steps} and 
$\lsk_1=\ls_{(1,1,4,d_4-5)}(2^{n-20})$ 
is non-special by induction on $d_4$, 
we conclude using 
Lemma~\ref{simple deg}.
\item
$(d_1,d_2,d_3,d_4)=(1,1,d_3,d_4)$, 
with $d_3,d_4\geq 6$. In this case we perform a
$(4,4d_4+4)$-degeneration obtaining the systems
\[
 \ls_1 = \ls_{(1,1,d_3,d_4-4)}(2^{n-4d_4-4})
 \qquad
 \ls_2 = \ls_{(1,1,d_3,4)}(2^{4d_4+4}).
\]
Since $\ls_2$ is empty by 
Proposition~\ref{base steps} and 
$\lsk_1=\ls_{(1,1,d_3,d_4-5)}(2^{n-4d_4-4})$
is non-special by induction on $d_4$, 
we conclude using 
Lemma~\ref{simple deg}.

\item
$(d_1,d_2,d_3,d_4)=(2,2,2,d_4)$, 
with $d_4\geq6$. In this case we perform a
$(4,27)$-degeneration obtaining the systems
\[
 \ls_1 = \ls_{(2,2,2,d_4-4)}(2^{n-27})
 \qquad
 \ls_2 = \ls_{(2,2,2,4)}(2^{27}).
\]
Since $\ls_2$ is empty by 
Proposition~\ref{base steps} and 
$\lsk_1=\ls_{(2,2,2,d_4-5)}(2^{n-27})$
is non-special by induction on $d_4$, 
we conclude using 
Lemma~\ref{simple deg}.
\end{enumerate}

Suppose now that $(d_1,d_2,d_3,d_4)$ is 
distinct from $(1,1,1,d_4)$, $(2,2,2,d_4)$
and $(1,1,2a,d_4)$. We perform a 
$(1,n_2,\beta)$-degeneration, with $n_2$ 
and $\beta$ defined as in~\eqref{case2}
obtaining the systems
\[
 \ls_1 = \ls_{(d_1,d_2,d_3,d_4-1)}(2^{n_1})
 \qquad
 \ls_2 = \ls_{(d_1,d_2,d_3,1)}(2^{n_2}). 
\]
Recall that exactly $\beta$ of the $n_2$
double points of $X_2$ are sent to $R$.
Thus the kernels are:
\[
 \lsk_1 = \ls_{(d_1,d_2,d_3,d_4-2)}(2^{n_1})
 \qquad
 \lsk_2 = \ls_{(d_1,d_2,d_3,0)}(2^{n_2-\beta},1^\beta). 
\]
Observe that $\lsk_2\cong
\ls_{(d_1,d_2,d_3)}(2^{n_2-\beta},1^\beta)$
and the last system is empty by induction
since it has virtual dimension $-1$.
Also $\lsk_1$ is empty since it is
non-special by induction and has 
negative virtual dimension.
Thus $\ls$ is non-special by 
the assumption on the $d_i$ and
by Lemma~\ref{formula l_0} and Lemma~\ref{transversality lemma}.
\end{proof}

\subsection{The case $r=5$}
We show that there are no special
systems in dimension $5$.

\begin{proposition}
Let $d_1,\dots,d_5$ be positive integers.
Then $\ls_{(d_1,\dots,d_5)}(2^n)$ is
non-special.
\end{proposition}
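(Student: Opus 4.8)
The plan is to mirror the structure of the preceding propositions for $r=2,3,4$, using the double degeneration technique as the workhorse and reducing to the already-established lower-dimensional and base cases. Since the statement asserts that there are \emph{no} special systems in dimension $5$, the only possible source of speciality would be one of the exceptional configurations inherited from lower dimensions; the key observation is that by the time we reach $r=5$, every potential obstruction either has a positive degree that allows a degeneration step, or falls under the base cases of Proposition~\ref{base steps}. First I would dispose of the base cases: by~\eqref{toPn} and Proposition~\ref{base steps}, the systems $\ls_{(1,1,1,1,d_5)}(2^n)$ with $d_5\leq 5$ are non-special, so after reordering the degrees I may assume $d_5\geq 6$ (or more precisely that the largest degree exceeds the relevant bound).

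The main engine is the $(1,n_2,\beta)$-degeneration with $k=1$, $n_2$ and $\beta$ chosen as in~\eqref{case2}, i.e. so that $\prod_{i=1}^{4}(d_i+1)=5(n_2-\beta)+\beta$ with $\beta\in\{0,\dots,4\}$. This produces
\[
 \ls_1 = \ls_{(d_1,\dots,d_4,d_5-1)}(2^{n_1}),
 \qquad
 \ls_2 = \ls_{(d_1,\dots,d_4,1)}(2^{n_2}),
\]
with kernels $\lsk_1 = \ls_{(d_1,\dots,d_4,d_5-2)}(2^{n_1})$ and $\lsk_2 \cong \ls_{(d_1,\dots,d_4)}(2^{n_2-\beta},1^\beta)$. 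By the $r=4$ proposition, the four-dimensional system governing $\lsk_2$ is non-special of virtual dimension $-1$ and hence empty, provided $(d_1,d_2,d_3,d_4)$ is not one of the $r=4$ exceptional types. To apply Lemma~\ref{formula l_0} together with the Transversality Lemma~\ref{transversality lemma}, I would invoke Lemma~\ref{res-lemma} to identify $\lr_2$, then verify that $\ls_{(d_1,\dots,d_4,d_5-1)}(2^{n_1+\beta})$ is non-special — again by induction on the degree using the $r=5$ hypothesis for the reduced degree, falling back on $r\leq 4$ results as needed. The non-speciality of $\lsk_1$ (with negative virtual dimension, hence empty) follows from the inductive hypothesis on the multi-degree.

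The subtle point is handling the degree configurations where the four-dimensional base $(d_1,d_2,d_3,d_4)$ coincides with an $r=4$ exceptional type, namely $(1,1,1,1)$, $(1,1,2a,\ast)$ reductions, or $(2,2,2,\ast)$. These must be treated separately, exactly as the $r=4$ proof peeled off the cases $(1,1,1,d_4)$, $(1,1,4,d_4)$, $(1,1,d_3,d_4)$, and $(2,2,2,d_4)$ via large $(4,n_2)$-degenerations with $\ls_2$ empty by Proposition~\ref{base steps}. Concretely, for the troublesome five-dimensional profiles I would perform a $(k,n_2)$-degeneration with $k$ large enough that $\ls_2$ is a base case with $v(\ls_2)=-1$ (hence empty), arranging $v(\lsk_1)=v(\ls)$, and then conclude by Lemma~\ref{simple deg} combined with induction on the top degree $d_5$.

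The hardest part will be the bookkeeping that guarantees transversality in the generic branch: one must confirm that the auxiliary system $\ls_{(d_1,\dots,d_4,d_5-1)}(2^{n_1+\beta})$ is non-special so that Lemma~\ref{transversality lemma} applies and $\lr_1,\lr_2$ meet properly. Because $\beta<r=5$, the $\beta$ extra nodes on $R$ remain general on $X_1$, so this reduces to another instance of the $r=5$ non-speciality at strictly smaller top degree, closing the induction. The only genuine obstacle is ensuring that \emph{every} exceptional four-dimensional base is absorbed either by a base-case degeneration (the $\ls_2$-empty branch) or by showing the relevant five-fold system never realizes those bases in a way that forces speciality; once the finitely many profiles $(1,1,1,1,\ast)$, $(1,1,2a,\ast,\ast)$, and $(2,2,2,\ast,\ast)$ are checked to be non-special directly, the induction goes through and no new exceptions arise in dimension $5$.
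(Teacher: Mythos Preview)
Your approach is the same as the paper's: peel off one exceptional profile with a large $(k,n_2)$-degeneration so that $\ls_2$ is empty, and handle everything else with the $(1,n_2,\beta)$-degeneration, concluding via Lemma~\ref{formula l_0} and the Transversality Lemma.

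The one misstep is your list of profiles requiring separate treatment. In the $(1,n_2,\beta)$-degeneration at $r=5$, the kernel $\lsk_2\cong\ls_{(d_1,\dots,d_4)}(2^{n_2-\beta},1^\beta)$ is a \emph{four}-dimensional system, and by the $r=4$ proposition the only special four-dimensional system is $\ls_{(1,1,1,1)}(2^3)$. Hence the only degree profile that obstructs the generic branch is $(d_1,d_2,d_3,d_4)=(1,1,1,1)$, i.e.\ the five-fold profile $(1,1,1,1,d_5)$. The cases you flag as $(1,1,2a,\ast,\ast)$ and $(2,2,2,\ast,\ast)$ do \emph{not} need separate handling: their first four degrees $(1,1,2a,d_4)$ and $(2,2,2,d_4)$ are already non-special in dimension four, so $\lsk_2$ is empty and the argument goes through unchanged. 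The three-dimensional exceptions you are remembering were absorbed in the $r=4$ proof and do not resurface here. The paper accordingly treats only $(1,1,1,1,d_5)$ (with $d_5\geq 6$, via a $(5,16)$-degeneration) as a separate case; for every other $(d_1,\dots,d_5)$ the $(1,n_2,\beta)$-degeneration suffices directly.
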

\begin{proof}
If $(d_1,d_2,d_3,d_4,d_5)=(1,1,1,1,d_5)$,
with $d_5\geq 6$, we perform a $(5,16)$-degeneration 
obtaining the linear systems
\[
 \ls_1 = \ls_{(1,1,1,1,d_5-5)}(2^{n-16})
 \qquad
 \ls_2 = \ls_{(1,1,1,1,5)}(2^{16}). 
\]
Since $\ls_2$ is empty by 
Proposition~\ref{base steps} and 
$\lsk_1=\ls_{(1,1,1,1,d_5-6)}(2^{n-16})$
is non-special by induction, we conclude using 
Lemma~\ref{simple deg}.

Suppose now $(d_1,d_2,d_3,d_4,d_5)\neq(1,1,1,1,d_5)$.
Like in the four dimensional case
we perform a $(1,n_2,\beta)$-degeneration, 
with $n_2$ and $\beta$ defined as in~\eqref{case2}
obtaining the systems
\[
 \ls_1 = \ls_{(d_1,\dots,d_4,d_5-1)}(2^{n_1})
 \qquad
 \ls_2 = \ls_{(d_1,\dots,d_4,1)}(2^{n_2}). 
\]
Recall that exactly $\beta$ of the $n_2$
double points of $X_2$ are sent to $R$.
Thus the kernels are:
\[
 \lsk_1 = \ls_{(d_1,\dots,d_4,d_5-2)}(2^{n_1})
 \qquad
 \lsk_2 = \ls_{(d_1,\dots,d_4,0)}(2^{n_2-\beta},1^\beta). 
\]
Observe that $\lsk_2\cong
\ls_{(d_1,d_2,d_3,d_4)}(2^{n_2-\beta},1^\beta)$
and the last system is empty by induction
since it has virtual dimension $-1$.
Also $\lsk_1$ is empty since it is
non-special by induction and has 
negative virtual dimension.
Thus $\ls$ is non-special by 
the assumption on the $d_i$ and
by Lemma~\ref{formula l_0}.
\end{proof}

\subsection{The cases $r\geq 6$}
We show that there are no special
systems in dimension $\geq 6$.

\begin{proposition}
Let $r\geq6$. Let $d_1,\dots,d_r$ be positive integers. 
Then the linear system $\ls_{(d_1,\dots,d_r)}(2^n)$ is
non-special.
\end{proposition}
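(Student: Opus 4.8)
The plan is to carry out, for $r\ge 6$, the very same double induction used above for $r\le 5$ — on the dimension $r$ and on the total degree $d=d_1+\cdots+d_r$ — but now in the pleasant situation that dimension $r-1\ge 5$ is already known to be free of exceptions. Permuting the factors, which is harmless since $(\pp^1)^r$ is symmetric in its coordinates, I may assume $d_1\le\cdots\le d_r$; and I may assume every $d_i\ge 1$, since whenever some $d_i=0$ the system descends to $(\pp^1)^{r-1}$ (a divisor of degree $0$ in the $i$-th factor is a pullback, and a double point descends to a double point), so that the statement follows by induction on $r$. The base of the degree induction is the all-ones case $d_1=\cdots=d_r=1$, where $\ls_{(1,\dots,1)}(2^n)$ is non-special by~\cite{CaGeGi3}; the hypothesis $r\ne 4$ is exactly what excludes the single exception of that paper. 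As before, the remark on $n^-$ and $n^+$ lets me restrict attention to $n^-\le n\le n^+$.

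For the inductive step I take $d_r\ge 2$ and perform the $(1,n_2,\beta)$-degeneration with $k=1$ and $n_2,\beta$ as in~\eqref{case2}, exactly as in the cases $r=4,5$. This produces
\[
 \ls_1=\ls_{(d_1,\dots,d_{r-1},d_r-1)}(2^{n_1}),\qquad
 \lsk_1=\ls_{(d_1,\dots,d_{r-1},d_r-2)}(2^{n_1}),
\]
together with $\ls_2=\ls_{(d_1,\dots,d_{r-1},1)}(2^{n_2})$ and the kernel $\lsk_2\cong\ls_{(d_1,\dots,d_{r-1})}(2^{n_2-\beta},1^\beta)$, whose virtual dimension is $-1$ by the choice of $n_2,\beta$. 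I then argue that both kernels vanish. The system $\lsk_2$ lives on $(\pp^1)^{r-1}$: since $\ls_{(d_1,\dots,d_{r-1})}(2^{n_2-\beta})$ is non-special by induction on $r$ (dimension $r-1\ge 5$ has no exceptions), adding $\beta$ general simple points keeps it non-special as in Lemma~\ref{res-lemma}, and having virtual dimension $-1$ it is empty. The system $\lsk_1$ has strictly smaller total degree, hence is non-special by induction on the degree, and one verifies its virtual dimension is negative, so it too is empty. Feeding $\dim(\lsk_1)=\dim(\lsk_2)=-1$ into Lemma~\ref{formula l_0} collapses the count to $\dim(\ls_0)=\dim(\lr)$; since $\lsk_1$ is empty we have $\dim(\lr_1)=\dim(\ls_1)=e(\ls_1)$, and Lemma~\ref{transversality lemma}, whose hypothesis is the non-speciality of the lower-degree system $\ls_{(d_1,\dots,d_{r-1},d_r-1)}(2^{n_1+\beta})$, computes $\dim(\lr)$. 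Matching this value against $e(\ls)$ and invoking Lemma~\ref{upp-semicont} yields the non-speciality of $\ls$.

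The conceptual reason a single generic degeneration suffices for $r\ge 6$ — and the many ad hoc families $(1,1,1,d_4)$, $(2,2,2,d_4)$, $(1,1,2a,d_4)$, \dots that forced separate $(r,n_2)$-degenerations in low dimension simply disappear — is that none of the auxiliary systems can land in an exceptional case: $\lsk_2$ sits in dimension $r-1\ge 5$, which is exception-free, while $\ls_1$, $\ls_2$, $\lsk_1$ and the transversality system are dimension-$r$ systems of strictly smaller degree, non-special by the very degree induction we are running. I expect the main obstacle to be purely arithmetic: confirming that the solution $(n_1,n_2,\beta)$ of~\eqref{case2} really forces $v(\lsk_1)<0$ while $v(\lsk_2)=-1$, and that the transversal value $\max\{\dim(\lr_1)-(n_2-\beta)-r\beta,-1\}$ returned by Lemma~\ref{transversality lemma} coincides with $e(\ls)$ for every $n$ in the range $[n^-,n^+]$. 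Once this bookkeeping is checked — and it is uniform in $r$, precisely because no low-dimensional exception ever intervenes — the induction closes.
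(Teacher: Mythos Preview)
Your proposal is correct and matches the paper's approach: induction on $d=\sum d_i$ with base case $d_i\equiv 1$ via~\cite{CaGeGi3}, a $(1,n_2,\beta)$-degeneration for the inductive step, emptiness of both kernels by induction on $d$ and on $r$, and Lemmas~\ref{formula l_0} and~\ref{transversality lemma} to obtain $\dim(\ls_0)=\dim(\lr)=e(\ls)$. The paper writes out the final identity $\dim(\lr_1)-(n_2-\beta)-r\beta=v(\ls)$ explicitly (and likewise leaves $v(\lsk_1)<0$ ``as one can easily check''), whereas you defer both as bookkeeping, but otherwise the arguments coincide.
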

\begin{proof}
We proceed by induction on $d:=d_1+\dots+d_r$.
If $d=r$, then $d_i=1$ for any $i$. This case
is non-special as proved in~\cite{CaGeGi3}.
If $d>r$, then after possibly reordering
the $d_i$ we can assume that $d_r>1$.
We perform a $(1,n_2,\beta)$-degeneration, 
with $n_2$ and $\beta$ defined as in~\eqref{case2}
obtaining the systems
\[
 \ls_1 = \ls_{(d_1,\dots,d_{r-1},d_r-1)}(2^{n_1})
 \qquad
 \ls_2 = \ls_{(d_1,\dots,d_{r-1},1)}(2^{n_2}). 
\]
Recall that exactly $\beta$ of the $n_2$
double points of $X_2$ are sent to $R$.
Thus the kernels are:
\[
 \lsk_1 = \ls_{(d_1,\dots,d_{r-1},d_r-2)}(2^{n_1})
 \qquad
 \lsk_2 = \ls_{(d_1,\dots,d_{r-1},0)}(2^{n_2-\beta},1^\beta). 
\]
Observe that $\lsk_2\cong
\ls_{(d_1,\dots,d_{r-1})}(2^{n_2-\beta},1^\beta)$
and the last system is empty by induction
since it has virtual dimension $-1$.
The kernel system $\lsk_1$ has negative 
virtual dimension for any $(d_1,\dots,d_r)$, as one can easily check. 
Furthermore the dimension of the limit 
system equals the dimension of the intersection 
$\lr_1\cap\lr_2$ by Lemma~\ref{formula l_0}. 
Indeed, since $\lr_1$ and $\lr_2$ intersect 
transversally by Lemma~\ref{transversality lemma}, 
we have
\begin{align*}
 \dim (\lr_1\cap\lr_2) &=\max\{\dim(\lr_1)-(n_2-\beta)-r\beta,-1\}\\
 & =\max\{\prod_{i=1}^r(d_i+1)-1-(r+1)n,-1\}=e(\ls).
\end{align*}
\end{proof}

\section{Special linear systems}
In this section we complete the proof of 
Theorem~\ref{classif-p1r} by calculating 
the dimension of each special system.

\begin{proposition}~\label{special}
The following linear systems have the 
stated virtual and effective dimensions.
In each case the system $\ls$ is singular
along a smooth rational variety.
\begin{center}
\begin{tabular}{c|c|c|r|c}
$r$ & degrees & $n$ & $v(\ls)$ & $\dim(\ls)$\\
\midrule
$2$ & $(2,2a)$     & $2a+1$ & $-1$ & $0$\\
$3$ & $(1,1,2a)$  & $2a+1$ & $-1$ & $0$\\
       & $(2,2,2)$    & $7$        & $-2$ & $0$\\
$4$ & $(1,1,1,1)$ & $3$        & $0$  & $1$\\
\end{tabular}
\end{center}
\end{proposition}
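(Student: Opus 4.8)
The virtual dimensions $v(\ls)$ in the table are immediate from the defining formula, so the real content is to compute $\dim(\ls)$ and to identify the smooth rational variety along which $\ls$ is singular. The plan is, in each of the four cases, to produce enough explicit (reducible or non-reduced) divisors to bound $\dim(\ls)$ from below, together with the claimed singular locus, and then to prove the matching upper bound by showing that \emph{every} member of $\ls$ must degenerate in the predicted way. For the two families with $v(\ls)=-1$ the lower bound will come from a single square (resp. product); for $(2,2,2)$ from the double of the unique $(1,1,1)$-divisor through the points; and for $(1,1,1,1)$ from an explicit pencil.

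First I would treat $\ls_{(2,2a)}(2^{2a+1})$ and $\ls_{(1,1,2a)}(2^{2a+1})$ together, by regarding a section $F$ as a binary quadratic form (respectively a $2\times 2$ matrix $M(z)$) in the coordinate $z$ of the last $\pp^1$-factor, with coefficients binary forms of degree $2a$. Let $\Delta(z)$ be its discriminant (respectively $\det M(z)$), a binary form of degree $4a$. A short computation with the adjugate shows that at a rank-one degeneration the $z$-derivative of $F$ is proportional to $\Delta'$, so $F$ is singular at a point with last coordinate $z_i$ precisely when $z_i$ is a double root of $\Delta$. Since the $2a+1$ points are general, the values $z_1,\dots,z_{2a+1}$ are distinct, and $\Delta$ would need $2(2a+1)=4a+2>4a$ roots counted with multiplicity; hence $\Delta\equiv 0$. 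This forces $F=L^2$ with $L$ of multidegree $(1,a)$ in the first case, and $F=G\cdot H$ with the two factors of multidegree $(1,0,a)$ and $(0,1,a)$ in the second; requiring the singular locus to contain the $2a+1$ general points then pins down the factors uniquely by interpolation. Thus $\dim(\ls)=0$, and the singular locus is the $(1,a)$-curve $\{L=0\}$, respectively the curve $\{G=0\}\cap\{H=0\}$ (the graph over the last $\pp^1$ of a pair of degree-$a$ maps), each a smooth rational curve.

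For $\ls_{(2,2,2)}(2^7)$ let $D$ be the unique divisor of $|\Osh(1,1,1)|$ through the $7$ general points, which exists and is unique because $\Osh(1,1,1)$ is very ample of projective dimension $7$; then $2D\in\ls$ is singular along $D$, a smooth del Pezzo surface of degree $6$ (hence rational), giving $\dim(\ls)\ge 0$. For the upper bound I would restrict a member $F\in\ls$ to $D$: since $\Osh(2,2,2)|_D=-2K_D$ by adjunction and the $7$ points lie on $D$, the restriction lies in $|-2K_D|$ and is singular there. Viewing $D$ as $\pp^2$ blown up at three points, this restricted system is the plane system $\ls_6(2^{10})$, which is empty because it is non-special of negative virtual dimension. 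Hence $F|_D\equiv 0$, so $D\mid F$ and $F=D\cdot F'$ with $F'\in|\Osh(1,1,1)|$; as $D$ is smooth, singularity of $F$ at each $p_i$ forces $F'(p_i)=0$, whence $F'=D$ and $F=2D$. Therefore $\dim(\ls)=0$.

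Finally, for $\ls_{(1,1,1,1)}(2^3)$ I would use that three general points can be moved, acting by $\Aut((\pp^1)^4)=(\mathrm{PGL}_2)^4$ factorwise, to $(0,0,0,0)$, $(1,1,1,1)$, $(\infty,\infty,\infty,\infty)$, all lying on the small diagonal $\delta\cong\pp^1$. Writing $F$ in affine multilinear coordinates $t_1,\dots,t_4$, the double points at $0^4$ and $\infty^4$ kill every coefficient except the six of weight two, while the double point at $1^4$ imposes the four derivative conditions on these (its value condition being their sum), and these four conditions are the vertex--edge incidence matrix of $K_4$. Since $K_4$ is non-bipartite this matrix has full rank $4$, so the solution space has dimension $2$ and $\dim(\ls)=1$; the pencil is spanned by $(t_1-t_2)(t_3-t_4)$ and $(t_1-t_3)(t_2-t_4)$, the third such product being their difference by the Pl\"ucker relation, and a direct check shows every member is singular along $\delta$. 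The main obstacle throughout is the upper bound, most delicately in the $(2,2,2)$ case where one must certify that no further member survives; the restriction-to-$D$ argument reduces this to a single non-special plane system, and the discriminant argument likewise converts the $(2,2a)$ and $(1,1,2a)$ upper bounds into a degree count, so the difficulty lies in setting up these reductions correctly rather than in any remaining computation.
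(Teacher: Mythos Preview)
Your argument is correct and complete in substance, though it follows a route quite different from the paper's. The paper computes each dimension by first transporting the problem to $\pp^r$ via the identity
\[
\dim \ls_{(d_1,\dots,d_r)}(2^n)=\dim \ls_d(d-d_1,\dots,d-d_r,2^n),\qquad d=\sum d_i,
\]
and then repeatedly applying the standard Cremona move $\dim\ls_d(m_1,\dots,m_{n+1},\dots)=\dim\ls_{d+k}(m_1+k,\dots,m_{n+1}+k,\dots)$; this reduces the four cases respectively to $\ls_2(2^2)$, $\ls_2(2^2,1^2)$, $\ls_4(2^9)$ in $\pp^3$ (Alexander--Hirschowitz), and $\ls_1(1^3)$. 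The singular loci are then read off from the explicit factorizations, matching exactly the varieties you found.

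Your approach instead works intrinsically on $(\pp^1)^r$: a discriminant/determinant degree count forces the factorizations $F=L^2$ and $F=GH$ in the $(2,2a)$ and $(1,1,2a)$ cases; a restriction to the del~Pezzo sextic $D\in|\Osh(1,1,1)|$ reduces $(2,2,2)$ to the emptiness of the plane system $\ls_6(2^{10})$; and an explicit $K_4$ incidence-matrix computation handles $(1,1,1,1)$. This is more hands-on and avoids the Cremona machinery, at the cost of being less uniform. Two points deserve a line of extra care in a final write-up: the implication ``$\Delta\equiv 0\Rightarrow F=L^2$'' (resp.\ ``$\det M\equiv 0\Rightarrow F=GH$'') over $\cc[z]$ is only literally true after ruling out a nontrivial common factor in $z$, which your interpolation count on the $2a{+}1$ general points does force; and the emptiness of $\ls_6(2^{10})$ should be attributed to the known classification of special double-point systems in $\pp^2$. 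Both are standard, so your proof stands.
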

\begin{proof}
Observe that Proposition~\ref{base steps}
already provide a computer based proof
of the first part of the statement. 
Anyway we prefer to give an alternative proof 
also for the calculation of the dimension
in order to make things more explicit.
The dimension of each such system can be found
by repeated use of formula~\eqref{toPn} together 
with the following one (see~\cite{LaUg}). Let
$\ls_{\pp^n}:=\ls_d(m_1,\dots,m_{n+1},m_{n+2},\dots,m_r)$,
then
\begin{equation}\label{standard}
 \dim\ls_{\pp^n}
 =
 \dim\ls_{d+k}(m_1+k,\dots,m_{n+1}+k,m_{n+2},\dots,m_r),
\end{equation}
where $k=(n-1)d-m_1-\dots-m_{n+1}$.
Our analysis begins with the two dimensional case.
If $r=2$ by applying~\eqref{toPn} first and~\eqref{standard} after 
we get
\begin{align*}
 \dim \ls_{(2,2a)}(2^{2a+1})
 & = 
 \dim \ls_{2a+2}(2a,2^{2a+2})\\
 & = 
 \dim \ls_{2a}(2(a-1),2^{2a})\\
 & = 
 \dim \ls_{2}(2^{2}) = 0.
\end{align*}
The linear system is singular along the unique
smooth rational curve in $\ls_{(1,a)}(1^{2a+1})$.
If $r=3$ we have to analyze two cases. The first
one is
\begin{align*}
 \dim \ls_{(1,1,2a)}(2^{2a+1})
 & = 
 \dim \ls_{2a+2}((2a+1)^2,2^{2a+2})\\
 & = 
 \dim \ls_{2a}(2a-1,2^{2a})\\
 & = 
 \dim \ls_{2}(2^2,1^2) = 0.
\end{align*}
Observe that the dimension of the last
system is $0$ since it contains just
a pair of planes which intersect along a line 
through the first two points. These two planes
can be seen also in the original system.
They are $D_1+D_2\in \ls_{(1,1,2a)}(2^{2a+1})$, 
with $D_1\in\ls_{(1,0,a)}(1^{2a+1})$ and 
$D_2\in\ls_{(0,1,a)}(1^{2a+1})$.
Thus the system is singular along the curve 
$D_1\cap D_2$ which is smooth by Bertini's 
theorem and rational by the adjunction formula.
The second case in dimension $r=3$ is
\begin{align*}
 \dim \ls_{(2,2,2)}(2^7)
 & = 
 \dim \ls_{6}(4^3,2^7)\\
 & = 
 \dim \ls_{4}(2^9) = 0.
\end{align*}
This last system is well known from
Alexander-Hirschowitz theorem, see~\cite{AlHi,Po}.
Observe that $\ls_{(2,2,2)}(2^7)$ is singular
along the smooth rational surface defined by 
$\ls_{(1,1,1)}(1^7)$.
Our last case is when $r=4$. Here we have
\begin{align*}
 \dim \ls_{(1,1,1,1)}(2^3)
 & = 
 \dim \ls_{4}(3^4,2^3)\\
 & = 
 \dim \ls_{2}(2^2,1^4)\\
 & = 
 \dim \ls_{1}(1^3) = 1.
\end{align*}
Consider now the linear system 
$\ls_{(1,1,1,1)}(2^3)$, where the three 
double points have coordinates:
$p_1:=([1:0],[1:0],[1:0],[1:0])$,
$p_2:=([0:1],[0:1],[0:1],[0:1])$,
$p_3:=([1:1],[1:1],[1:1],[1:1])$.
An easy calculation shows that 
the elements of the linear system
are zero locus of the sections
of the pencil:
\[
 \alpha_0\,(x_0y_1-x_1y_0)(z_0w_1-z_1w_0)
 +
 \alpha_1\,(x_0z_1-x_1z_0)(y_0w_1-y_1w_0).
\]
In particular the general element
of the pencil is singular along the curve
$V(x_0y_1-x_1y_0)\cap V(z_0w_1-z_1w_0)\cap
V(x_0z_1-x_1z_0)\cap V(y_0w_1-y_1w_0)$.
This is exactly the image of the
diagonal morphism 
$\Delta:\pp^1\to\pp^1\times\pp^1
\times\pp^1\times\pp^1$,
hence it is a smooth rational curve.
\end{proof}

\subsection{The $7$-secant variety of 
the $(2,2,2)$-embedding of $(\pp^1)^3$.}
Our list of special systems contains the 
case $\ls_{(2,2,2)}(2^7)$ which 
has virtual dimension $-2$ and dimension 
$0$. In this section we will show that 
the corresponding secant variety enjoys 
a symmetry which allows us to determine 
its equation.

Let $\varphi:(\pp^1)^3\to\pp^{26}$ be 
the Segre-Veronese embedding of $(\pp^1)^3$
defined by the complete linear system 
$\ls_{(2,2,2)}$, and let $S$ be the 
$7$-secant variety to 
the image of $\varphi$. Denote, by abuse 
of notation, with the same letter the 
corresponding linear map of vector spaces 
\[
 \xymatrix@R=0pt{
 \varphi: V\otimes V\otimes V\ar[r]
 &
 \Sym^2(V)\otimes\Sym^2(V)\otimes\Sym^2(V)\\
 \hspace{8mm}a\otimes b\otimes c\ar@{|->}[r] & 
 a^2\otimes b^2\otimes c^2,\hspace{30mm}
 }
\]
where each copy of $V$ represents the 
vector space of degree $1$ homogeneous 
polynomials in two variables. Let $V^*$ 
be the dual of $V$. 
Let $W:=\Sym^2(V)^{\otimes^3}$ be the 
codomain of $\varphi$. Given a polynomial 
$f\in W$, it belongs to the $7$-secant 
variety $S$ if and only if
\[
 f = a_1^2b_1^2c_1^2+\dots+a_7^2b_7^2c_7^2,
\]
where we are adopting the short notation 
$abc$ for $a\otimes b\otimes c$.
Consider the catalecticant map
\[
 C_f: (V^*)^{\otimes^3}
 \to
 V^{\otimes^3}
 \qquad
 a^*b^*c^*\mapsto a^*b^*c^*(f).
\]
We have that if $f\in S$, then 
$\rk(C_f)\leq 7$. Hence, as $f$ varies 
in $W$, the determinant of $C_f$ gives 
an equation that vanishes on $S$. 
Since this equation turns out to be 
irreducible and $S$ is an irreducible 
hypersurface, this will provide us the 
equation of $S$. In coordinates, let 
$a_ib_jc_k$, with $i,j,k\in\{0,1\}$ 
be a basis of $V^{\otimes^3}$ and let 
$a_i^*b_j^*c_k^*$ be a basis of the 
dual vector space $(V^*)^{\otimes^3}$ 
such that $a_i^*b_j^*c_k^*(a_ib_jc_k)=1$. 
An element $f\in W$ can be uniquely 
written as
\[
 f = z_0\,a_0^2b_0^2c_0^2+z_1\,a_0a_1b_0^2c_0^2+\dots+z_{26}\,a_1^2b_1^2c_1^2.
\]
We have that, for example, 
$a_0^*b_0^*c_0^*(f)=8z_0\,
a_0b_0c_0+4z_1\,a_1b_0c_0+4z_3\,
a_0b_1c_0+2z_4\,a_1b_1c_0+4z_9\,
a_0b_0c_1+2z_{10}\,a_1b_0c_1+2z_{12}\,
a_0b_1c_1+z_{13}\,a_1b_1c_1$.
Thus the equation of $S$ is given 
by the vanishing of the determinant
\[
\left| \begin {array}{cccccccc} 8\,z_{{0}}&4\,z_{{9}}&4\,z_{{3}}&2\,z
_{{12}}&4\,z_{{1}}&2\,z_{{10}}&2\,z_{{4}}&z_{{13}}
\\ \noalign{\medskip}4\,z_{{1}}&2\,z_{{10}}&2\,z_{{4}}&z_{{13}}&8\,z_{
{2}}&4\,z_{{11}}&4\,z_{{5}}&2\,z_{{14}}\\ \noalign{\medskip}4\,z_{{3}}
&2\,z_{{12}}&8\,z_{{6}}&4\,z_{{15}}&2\,z_{{4}}&z_{{13}}&4\,z_{{7}}&2\,
z_{{16}}\\ \noalign{\medskip}2\,z_{{4}}&z_{{13}}&4\,z_{{7}}&2\,z_{{16}
}&4\,z_{{5}}&2\,z_{{14}}&8\,z_{{8}}&4\,z_{{17}}\\ \noalign{\medskip}4
\,z_{{9}}&8\,z_{{18}}&2\,z_{{12}}&4\,z_{{21}}&2\,z_{{10}}&4\,z_{{19}}&
z_{{13}}&2\,z_{{22}}\\ \noalign{\medskip}2\,z_{{10}}&4\,z_{{19}}&z_{{
13}}&2\,z_{{22}}&4\,z_{{11}}&8\,z_{{20}}&2\,z_{{14}}&4\,z_{{23}}
\\ \noalign{\medskip}2\,z_{{12}}&4\,z_{{21}}&4\,z_{{15}}&8\,z_{{24}}&z
_{{13}}&2\,z_{{22}}&2\,z_{{16}}&4\,z_{{25}}\\ \noalign{\medskip}z_{{13
}}&2\,z_{{22}}&2\,z_{{16}}&4\,z_{{25}}&2\,z_{{14}}&4\,z_{{23}}&4\,z_{{
17}}&8\,z_{{26}}\end{array} \right| = 0.
\]

\section*{Acknowledgements}
It is a pleasure to thank Giorgio Ottaviani
for suggesting us the idea of how to calculate
the equation of the secant variety of the 
$(2,2,2)$-embedding of $(\pp^1)^3$.
We are pleased as well to thank Chiara 
Brambilla for many useful conversations.

\begin{bibdiv}
\begin{biblist}

\bib{AbBr1}{article}{
    AUTHOR = {Abo, H.},
     AUTHOR = {Brambilla, M. C.},
     TITLE = {On the dimensions of secant varieties of Segre-Veronese varieties},
JOURNAL = {arXiv:0912.4342v1 [math.AG], preprint},
      YEAR = {2009},
}

\bib{AbBr2}{article}{
    AUTHOR = {Abo, H.},
     AUTHOR = {Brambilla, M. C.},
     TITLE = {New examples of defective secant varieties of Segre-Veronese varieties},
JOURNAL = {	arXiv:1101.3202v1 [math.AG], preprint},
      YEAR = {2011},
}

\bib{Ab}{article}{
    AUTHOR = {Abrescia, S.},
     TITLE = {About defectivity of certain Segre-Veronese varieties},
JOURNAL = {Canad. J. Math. 60},
      YEAR = {2008},
      PAGES = {961-974}
}

\bib{AlHi}{article}{
    AUTHOR = {Alexander, J.}
    AUTHOR = {Hirschowitz, A.},
     TITLE = {Polynomial interpolation in several variables},
   JOURNAL = {J. Algebraic Geom.},
  FJOURNAL = {Journal of Algebraic Geometry},
    VOLUME = {4},
      YEAR = {1995},
    NUMBER = {2},
     PAGES = {201--222},
      ISSN = {1056-3911},
   MRCLASS = {14N10 (14F17 14Q15)},
  MRNUMBER = {1311347 (96f:14065)},
MRREVIEWER = {Fyodor L. Zak},
}

\bib{Ba}{article}{
    AUTHOR = {Ballico, E.},
     TITLE = {On the non-defectivity and non weak-defectivity of Segre-Veronese embeddings of products of projective spaces},
JOURNAL = {Port. Math. 63	arXiv:1101.3202v1 [math.AG], preprint},
PAGES = {101-111}
      YEAR = {2006},
}
\bib{BaDr}{article}{
    AUTHOR = {Baur, Karin},
    AUTHOR = {Draisma, Jan},
     TITLE = {Secant dimensions of low-dimensional homogeneous varieties},
   JOURNAL = {Adv. Geom.},
  FJOURNAL = {Advances in Geometry},
    VOLUME = {10},
      YEAR = {2010},
    NUMBER = {1},
     PAGES = {1--29},
      ISSN = {1615-715X},
     CODEN = {AGDEA3},
   MRCLASS = {14N05 (14T05)},
  MRNUMBER = {2603719 (2011e:14099)},
MRREVIEWER = {Nicolas Perrin},
       DOI = {10.1515/ADVGEOM.2010.001},
       URL = {http://dx.doi.org/10.1515/ADVGEOM.2010.001},
}

\bib{CaGeGi}{article}{
    AUTHOR = {Catalisano, M. V.},
    AUTHOR = {Geramita, A. V.},
    AUTHOR = {Gimigliano, A.},
     TITLE = {Higher secant varieties of {S}egre-{V}eronese varieties},
 BOOKTITLE = {Projective varieties with unexpected properties},
     PAGES = {81--107},
 PUBLISHER = {Walter de Gruyter GmbH \& Co. KG, Berlin},
      YEAR = {2005},
   MRCLASS = {14N05},
  MRNUMBER = {2202248 (2007k:14109a)},
}

\bib{CaGeGi2}{article}{
    AUTHOR = {Catalisano, M. V.},
    AUTHOR = {Geramita, A. V.},
    AUTHOR = {Gimigliano, A.},
     TITLE = {Segre-{V}eronese embeddings of {$\Bbb P\sp 1\times\Bbb P\sp
              1\times\Bbb P\sp 1$} and their secant varieties},
   JOURNAL = {Collect. Math.},
  FJOURNAL = {Universitat de Barcelona. Collectanea Mathematica},
    VOLUME = {58},
      YEAR = {2007},
    NUMBER = {1},
     PAGES = {1--24},
      ISSN = {0010-0757},
     CODEN = {COLMBA},
   MRCLASS = {14N05 (13D40 14Q15)},
  MRNUMBER = {2310544 (2008f:14069)},
MRREVIEWER = {Fyodor L. Zak},
}

\bib{CaGeGi3}{article}{
    AUTHOR = {Catalisano, M. V.},
    AUTHOR = {Geramita, A. V.},
    AUTHOR = {Gimigliano, A.},
     TITLE = {Secant varieties of $\pp^1\times\cdots\times\pp^1$ 
     ($n$-times) are NOT defective for $n\geq 5$},
   JOURNAL = {J. Algebraic Geom.}
    VOLUME = {20},
      YEAR = {2011},
     PAGES = {295--327},
}

\bib{ChCi}{article}{
    AUTHOR = {Chiantini, L.},
    AUTHOR = {Ciliberto, C.},
     TITLE = {Weakly defective varieties},
   JOURNAL = {Trans. Amer. Math. Soc.},
  FJOURNAL = {Transactions of the American Mathematical Society},
    VOLUME = {354},
      YEAR = {2002},
    NUMBER = {1},
     PAGES = {151--178 (electronic)},
      ISSN = {0002-9947},
     CODEN = {TAMTAM},
   MRCLASS = {14N05 (14J10 14N15)},
  MRNUMBER = {1859030 (2003b:14063)},
MRREVIEWER = {Fyodor L. Zak},
       DOI = {10.1090/S0002-9947-01-02810-0},
       URL = {http://dx.doi.org/10.1090/S0002-9947-01-02810-0},
}
\bib{CM1}{article}{ 
    AUTHOR = {Ciliberto, C.},
    AUTHOR = {Miranda, R.}
    TITLE = {Degenerations of Planar Linear Systems}, 
    JOURNAL = {J.Reine Angew. Math. 501}
    PAGES = {191-220},
    YEAR = {1998},
    }

\bib{CM2}{article}{ 
    AUTHOR = {Ciliberto, C.},
    AUTHOR = {Miranda, R.}
    TITLE = {Linear Systems of Plane Curves with Base Points of Equal Multiplicity}, 
    JOURNAL = { Trans. Amer. Math. Soc. 352}
    PAGES = {4037-4050},
    YEAR = {2000},
    }
\bib{La}{article}{
    AUTHOR = {Laface, A.},
     TITLE = {On linear systems of curves on rational scrolls},
   JOURNAL = {Geom. Dedicata},
  FJOURNAL = {Geometriae Dedicata},
    VOLUME = {90},
      YEAR = {2002},
     PAGES = {127--144},
      ISSN = {0046-5755},
     CODEN = {GEMDAT},
   MRCLASS = {14C20 (14J25)},
  MRNUMBER = {1898157 (2003b:14011)},
MRREVIEWER = {C{\'{\i}}cero Fernandes de Carvalho},
       DOI = {10.1023/A:1014958409472},
       URL = {http://dx.doi.org/10.1023/A:1014958409472},
}

\bib{LaUg}{article}{
    AUTHOR = {Laface, A.},
    AUTHOR = {Ugaglia, L.},
     TITLE = {Standard classes on the blow-up of $\pp^n$ 
     at points in very general position},
   JOURNAL = {To appear on Communications in Algebra,
   arXiv:1004.4010v1 [math.AG], preprint}
    YEAR = {2010},
}

\bib{Po}{article}{
    AUTHOR = {Postinghel, E.},
     TITLE = {A new proof of the Alexander-Hirschowitz Interpolation Theorem},
JOURNAL = {To appear on Annali di Matematica Pura e Applicata, 	arXiv:1003.0323v1 [math.AG], preprint},
      YEAR = {2010},
}

\bib{VT}{article}{
    AUTHOR = {Van Tuyl, Adam},
     TITLE = {An appendix to a paper of {M}. {V}. {C}atalisano, {A}. {V}.
              {G}eramita and {A}. {G}imigliano. {T}he {H}ilbert function of
              generic sets of 2-fat points in {$\Bbb P\sp 1\times\Bbb P\sp
              1$}: ``{H}igher secant varieties of {S}egre-{V}eronese
              varieties'' [in {\it {P}rojective varieties with unexpected
              properties}, 81--107, {W}alter de {G}ruyter {G}mb{H} \& {C}o.
              {KG}, {B}erlin, 2005; MR2202248]},
 BOOKTITLE = {Projective varieties with unexpected properties},
     PAGES = {109--112},
 PUBLISHER = {Walter de Gruyter GmbH \& Co. KG, Berlin},
      YEAR = {2005},
   MRCLASS = {14N05 (13D40)},
  MRNUMBER = {2202249 (2007k:14109b)},
}

\end{biblist}
\end{bibdiv}

\end{document}